\documentclass[12pt, hyphens]{article}
\usepackage[a4paper, 
            top=1.0in, 
            bottom=1.0in, 
            left=1.0in, 
            right=1.0in,
            hmarginratio=1:1,
            bindingoffset=0.0in
]
{geometry}
\usepackage[utf8]{inputenc}
\usepackage[T1]{fontenc}
\usepackage{lmodern}
\usepackage[english]{babel}
\usepackage{indentfirst}
\usepackage{amsmath,amssymb,mathtools}
\usepackage{mathrsfs} 
\usepackage{microtype}
\usepackage{euscript}
\usepackage[dvipsnames, svgnames]{xcolor} 
\usepackage{bm} 
\usepackage[misc]{ifsym}
\usepackage[normalem]{ulem} 
\usepackage{setspace}
\usepackage{caption}
\usepackage{subcaption}
\usepackage{graphicx}
\usepackage{xparse}
\usepackage{etoolbox}
\usepackage{booktabs}       % professional-quality tables
\usepackage{nicefrac}       % compact symbols for 1/2, etc.
\usepackage{stmaryrd}
\usepackage{algorithm}
\usepackage{algpseudocode}

\usepackage[shortlabels]{enumitem} 
\newcounter{assumpitem}
\usepackage[numbers, sort&compress]{natbib}
\usepackage[unicode=true, pdfnewwindow]{hyperref}
\hypersetup{pdfencoding=auto, psdextra, colorlinks=false, pdfborder={0 0 1}, linktoc=page}
\usepackage[numbered]{bookmark}% always after hyperref
\usepackage{amsthm}
\theoremstyle{plain}
\newtheorem{theorem}{Theorem}[section]

\newtheorem{lemma}[theorem]{Lemma}

\newtheorem{proposition}[theorem]{Proposition}

\newtheorem{claim}[theorem]{Claim}
\theoremstyle{definition}
\newtheorem{definition}[theorem]{Definition}
\newtheorem{remark}[theorem]{Remark}
\newtheorem{example}[theorem]{Example}

\newtheorem{assumptions}[theorem]{Assumptions}

\numberwithin{equation}{section}
\makeatletter
\if@titlepage%
\else
    \def\abstract@style{heading} % heading | inline
    \def\abstract@size{\small}
    
    \newcommand{\setabstractstyle}[1]{%
      \def\abstract@style{#1}%
    }
    
    \newcommand{\setabstractsize}[1]{%
      \def\abstract@size{#1}%
    }
    
    \let\orig@abstract\abstract
    \let\endorig@abstract\endabstract
    
    \renewenvironment{abstract}{%
      \if@twocolumn
        \orig@abstract%
      \else
        \abstract@size%
        \edef\temp@a{inline}%
        \ifx\abstract@style\temp@a%
          \quotation
          \noindent\textbf{\abstractname.}\ %
          \ignorespaces
        \else
          \begin{center}%
          {\bfseries\abstractname\vspace{-.5em}\vspace{\z@}}%
          \end{center}%
          \quotation
          \ignorespaces
        \fi
      \fi
    }{%
      \if@twocolumn
        \endorig@abstract
      \else
        \endquotation % both inline and heading
      \fi
    }
\fi
\makeatother
\setabstractstyle{inline} % "heading" default for article class, also inline is an option
\setabstractsize{\normalsize} % \small is default for article
\renewcommand{\proofname}{Proof}% 
\makeatletter
\renewenvironment{proof}[1][\proofname]{%
  \par\pushQED{\qed}\normalfont
  \topsep6\p@\@plus6\p@\relax % topsep by .. can stretch (@plus) by ..
  \trivlist
  \item[\hskip\labelsep\bfseries #1\@addpunct{.}]\ignorespaces
}{%
  \popQED\endtrivlist\@endpefalse
}
\makeatother
\usepackage{tocloft}
\makeatletter
\newif\ifinappendixbookmark
\inappendixbookmarkfalse

\let\oldappendix\appendix
\renewcommand{\appendix}{%
  \oldappendix
  \inappendixbookmarktrue 
}
\makeatother

\newif\ifdisableappendixbookmark
\disableappendixbookmarkfalse

\makeatletter
\newcommand*{\AddAppendixPrefixInBookmarks}{%
  \ifinappendixbookmark
    \ifnum\bookmarkget{level}=1
      \ifdisableappendixbookmark
      \else
        \preto\bookmark@text{Appendix\space}%
      \fi
    \fi
  \fi
}
\bookmarksetup{
  addtohook=\AddAppendixPrefixInBookmarks,
}
\makeatother
\makeatletter
\renewcommand\Hy@numberline[1]{#1.\space}%
\makeatother

\makeatletter
\newif\if@appendixsec
\@appendixsecfalse
\apptocmd{\appendix}{%
  \@appendixsectrue
}{}{}

\renewcommand{\@seccntformat}[1]{%
  \if@appendixsec
    \ifstrequal{#1}{section}{%
      Appendix~\thesection.\hskip.5em%\quad% \hskip.5em = \enspace
    }{%
      \csname the#1\endcsname.\hskip.5em%\quad%
    }%
  \else
    \csname the#1\endcsname.\hskip.5em%\quad%
  \fi
}
\makeatother
\makeatletter
\newif\ifinappendixtoc
\inappendixtocfalse

\let\oldappendixtoc\appendix
\renewcommand{\appendix}{%
  \oldappendixtoc
  \addtocontents{toc}{\protect\inappendixtoctrue}%
}
\makeatother

\makeatletter
\let\oldnumberline\numberline
\renewcommand{\numberline}[1]{%
  \ifinappendixtoc
    \oldnumberline{\vphantom{[]}#1.\hskip.5em}%\quad}%
  \else
    \oldnumberline{#1.\vphantom{[]}}%
  \fi
}
\makeatother

\makeatletter
\let\oldcontentsline\contentsline

\renewcommand{\contentsline}[4]{%
  \ifinappendixtoc
    \ifstrequal{#1}{section}{%
      \oldcontentsline{#1}{\vphantom{[]}Appendix~#2}{\vphantom{[]}#3}{#4}%
    }{%
      \oldcontentsline{#1}{\vphantom{[]}#2}{\vphantom{[]}#3}{#4}%
    }%
  \else
    \oldcontentsline{#1}{\vphantom{[]}#2}{\vphantom{[]}#3}{#4}%
  \fi
}
\makeatother
\makeatletter
\AtBeginDocument{%
    \pretocmd{\NAT@hyper@}{\vphantom{[]}}{}{}% \NAT@num 
    \def\def@NAT@last@yr#1{%
     \protected@edef\NAT@last@yr{%
      #1%
      \noexpand\mbox{%
       \noexpand\hyper@natlinkstart{\@citeb\@extra@b@citeb}%
       {\noexpand\citenumfont{\NAT@num\vphantom{[]}}}%
       \noexpand\hyper@natlinkend
      }%
     }%
    }%
}
\makeatother

\let\oldbibliographystyle\bibliographystyle
\renewcommand{\bibliographystyle}[1]{%
    \addtocontents{toc}{\protect\inappendixtocfalse}
    \disableappendixbookmarktrue
    \oldbibliographystyle{#1}%
}

\AtEndDocument{
\addcontentsline{toc}{section}{\refname}
}

\let\LTXlabel\label
\makeatletter
\renewcommand{\footnote}[2][\empty]{%
  \nolinebreak%
  \refstepcounter{footnote}% <-- FIX: step once, correctly
  \global\edef\sfootnote@arabic{\arabic{footnote}}%
  \global\protected@edef\sfootnote@the{\thefootnote}%
  \ltx@ifpackageloaded{hyperref}{%
    \ifHy@hyperfootnotes%
      \refstepcounter{Hfootnote}% <-- FIX: keep in sync
      \global\let\Hy@saved@currentHref\@currentHref%
      \hyper@makecurrent{Hfootnote}%
      \global\let\Hy@footnote@currentHref\@currentHref%
      \global\let\@currentHref\Hy@saved@currentHref%
    \fi%
  }{}%
  \xdef\sfootnote@opt{#1}%
  \ifx\sfootnote@opt\empty%
    \footnotetext{\LTXlabel{fnr:\sfootnote@arabic}#2}%
  \else%
    \ltx@ifpackageloaded{hyperref}{%
      \footnotetext[#1]{\phantomsection\LTXlabel{fnr:\sfootnote@arabic}#2\vphantom{Xg}}%
    }{%
      \footnotetext[#1]{\LTXlabel{fnr:\sfootnote@arabic}#2}%
    }%
  \fi%
  \ltx@ifpackageloaded{hyperref}{%
    \ifHy@hyperfootnotes%
      \hbox{\@textsuperscript{\,\normalfont\ref{fnr:\sfootnote@arabic}\vphantom{Xg}}}% I put \,
    \else%
      \hbox{\@textsuperscript{\normalfont\ref*{fnr:\sfootnote@arabic}}}%
    \fi%
  }{%
    \hbox{\@textsuperscript{\normalfont\ref{fnr:\sfootnote@arabic}}}%
  }%
  \;%
  \ignorespaces%
}
\makeatother

\makeatletter
\newif\if@maketitle@nohyper
\let\orig@maketitle\maketitle
\renewcommand{\maketitle}{%
  \@maketitle@nohypertrue
  \orig@maketitle
  \@maketitle@nohyperfalse
}
\makeatother

\makeatletter
\renewcommand{\footnotemark}[1][]{%
  \leavevmode
  \ifhmode\edef\@x@sf{\the\spacefactor}\nobreak\fi
  \def\@tempa{#1}%
  \ifx\@tempa\@empty
    \stepcounter{footnote}%
    \protected@xdef\@thefnmark{\thefootnote}%
  \else
    \begingroup
      \c@footnote #1\relax
      \unrestored@protected@xdef\@thefnmark{\thefootnote}%
    \endgroup
  \fi
  \ltx@ifpackageloaded{hyperref}{%
  \ifHy@hyperfootnotes
    \if@maketitle@nohyper
    \else
      \refstepcounter{Hfootnote}%
      \global\let\Hy@saved@currentHref\@currentHref
      \hyper@makecurrent{Hfootnote}%
      \global\let\Hy@footnote@currentHref\@currentHref
      \global\let\@currentHref\Hy@saved@currentHref
    \fi
  \fi
  }{}%
  \textsuperscript{%
    \normalfont
    \ltx@ifpackageloaded{hyperref}{%
      \ifHy@hyperfootnotes
        \if@maketitle@nohyper
          \@thefnmark
        \else
          \,\hyper@linkstart{link}{\Hy@footnote@currentHref}%
          \@thefnmark\vphantom{Xg}%
          \hyper@linkend
        \fi
      \else
        \@thefnmark
      \fi
    }{%
       \@thefnmark
    }%
  }%
  \ifhmode\spacefactor\@x@sf\fi
  \;\ignorespaces
}
\makeatother

\newcommand{\myref}[1]{%
  \mbox{\ref{#1}\vphantom{()}}%
}
\makeatletter
\DeclareRobustCommand{\myhyperref}[2]{%
  \mbox{%
    \hyperref[#2]{%
     \ref*{#1}\;\textup{\tagform@{\ref*{#2}}}\,%
    }\vphantom{()}%
  }%
}
\makeatletter
\DeclareRobustCommand{\myeqhyperrefrange}[4]{%
  \mbox{%
    \hyperref[#2]{%
      \ref*{#1}\;\textup{\tagform@{\ref*{#3}}}\,--\,\textup{\tagform@{\ref*{#4}}}\,%
    }\vphantom{()}%
  }%
}
\makeatother

\newcommand{\myEqref}[1]{\mbox{\hyperref[#1]{(\ref*{#1})}\vphantom{()}}}
\makeatletter
\NewDocumentCommand{\runinsectionstar}{o m}{%
  \IfNoValueTF{#1}{%
    \def\runin@size{\Large}%
  }{%
    \def\runin@size{#1}%
    \if\relax\detokenize{#1}\relax
      \def\runin@size{\Large}%
    \fi
  }%
  \addcontentsline{toc}{section}{#2}%
  \@startsection{section}%
    {1}%
    {\z@}%
    {-3.5ex \@plus -1ex \@minus -.2ex}%
    {-1em}%
    {\normalfont\runin@size\bfseries}*%
    {#2.}%
}
\makeatother
\makeatletter
\NewDocumentCommand{\runinsubsectionstar}{o m}{%
  \IfNoValueTF{#1}{%
    \def\runin@size{\large}%
  }{%
    \def\runin@size{#1}%
    \if\relax\detokenize{#1}\relax
      \def\runin@size{\large}%
    \fi
  }%
  \addcontentsline{toc}{subsection}{#2}%
  \@startsection{subsection}%
    {2}%
    {\z@}%
    {-3.25ex \@plus -1ex \@minus -.2ex}%
    {-1em}%
    {\normalfont\runin@size\bfseries}*%
    {#2.}%
}
\makeatother

\makeatletter
\NewDocumentCommand{\runinsubsection}{o m}{%
  \IfNoValueTF{#1}{%
    \def\runin@size{\large}%
  }{%
    \def\runin@size{#1}%
    \if\relax\detokenize{#1}\relax
      \def\runin@size{\large}%
    \fi
  }%
  \@startsection{subsection}%
    {2}%
    {\z@}%
    {-3.25ex \@plus -1ex \@minus -.2ex}%
    {-1em}%
    {\normalfont\runin@size\bfseries}%
    {#2.}%
}
\makeatother
\DeclareMathOperator*{\argmin}{arg\,min}
\DeclareMathOperator{\jac}{Jac}
\DeclareMathOperator*{\graph}{gph}
\DeclareMathOperator*{\minimize}{minimize}
\DeclareMathOperator*{\cl}{cl}
\DeclareMathOperator*{\interior}{int}
\DeclareMathOperator*{\conv}{conv\!}
\DeclareMathOperator*{\kkt}{Mult}

\DeclareMathOperator*{\dist}{dist}
\newcommand{\isomorph}{%
  \mathrel{\lower0.2ex\hbox{$\cong$}}
}
\providecommand{\transp}[1]{\ensuremath {#1}^{\!\mathsf{T}}}
\providecommand{\norm}[1]{\ensuremath \lVert{#1}\rVert}
\providecommand{\scalarp}[1]{\ensuremath \langle{#1}\rangle}
\providecommand{\metricd}[2]{\ensuremath \norm{{#1}-{#2}}}
\providecommand{\indicator}{\ensuremath \delta}

\newcommand{\vx}{\ensuremath x}
\newcommand{\vy}{\ensuremath y}
\newcommand{\vz}{\ensuremath z}
\newcommand{\vu}{\ensuremath u}
\newcommand{\vv}{\ensuremath v}
\newcommand{\vV}{\ensuremath V}
\newcommand{\vw}{\ensuremath w}

\newcommand{\vt}{\ensuremath t}
\newcommand{\tht}{\ensuremath \theta}
\newcommand{\lbd}{\ensuremath \lambda}

\newcommand{\param}{\ensuremath \tht}
\newcommand{\pvar}{\ensuremath \vx}
\newcommand{\dvar}{\lbd}
\newcommand{\dvarr}{\mu}
\newcommand{\st}{\mathrm{subject\text{ }to}}
\newcommand{\R}{\ensuremath \mathbb{R}}
\newcommand{\N}{\ensuremath \mathbb{N}}

\newcommand{\Rell}{\ensuremath \R^{\ell}}
\newcommand{\Rm}{\ensuremath \R^m}
\newcommand{\Rn}{\ensuremath \R^n}
\newcommand{\Rp}{\ensuremath \R^p}
\newcommand{\Rq}{\ensuremath \R^q}
\newcommand{\Rr}{\ensuremath \R^r}
\newcommand{\Rs}{\ensuremath \R^s}
\newcommand{\Lcal}{\ensuremath \mathcal{L}}
\newcommand{\Ucal}{\ensuremath \mathcal{U}}
\newcommand{\Bcal}{\ensuremath \mathcal{B}}

\newcommand{\Ncal}{\ensuremath \mathcal{N}}
\newcommand{\Ocal}{\ensuremath \mathcal{O}}

\newcommand{\solpd}{\ensuremath \mathrm{Sol_{pd}}}
\newcommand{\solvpd}{\ensuremath \mathrm{solver_{pd}}}

\newcommand{\solp}{\ensuremath \mathrm{Sol_{p}}}
\newcommand{\solvp}{\ensuremath \mathrm{solver_{p}}}
\newcommand{\consv}{\ensuremath D}
\newcommand{\clarke}{\ensuremath \partial^{c}}
\newcommand{\clarkep}[1]{\ensuremath \partial^{c}_{#1}}
\newcommand{\infny}{\ensuremath \infty}

\usepackage{academicons}
\title{The adjoint state method for parametric definable optimization without  smoothness or uniqueness}
\author{
J\'er\^ome Bolte\thanks{Toulouse School of Economics, Toulouse Capitole University, Toulouse, France \\(\texttt{cheik.traore@tse-fr.eu})}
\and
Edouard Pauwels\footnotemark[1]
\and   
Cheik Traor\'e$\,^{\textrm{\Letter}}\,$\footnotemark[1]
}
\date{\today}
\begin{document}
\maketitle
\begin{abstract}
We establish that nonconvex definable parametric optimization problems with possibly nonsmooth objectives, inequality constraints, conic constraint systems,  and non-unique primal and dual solutions admit an adjoint state formula under a mere qualification condition. The adjoint construction yields a selection of a conservative field for the value function, providing a computable first-order object without requiring differentiation of the solution mapping. Through examples, we show that even in smooth problems, the formal adjoint construction fails without conservativity or definability, illustrating the relevance of these concepts to grasp theoretical aspects of the method. This work provides a tool which can be directly combined with existing primal-dual solvers for a wide range of parametric optimization problems.
\end{abstract}

\noindent {\bf\small Key words and phrases.} {\small Adjoint state method, conservative fields, nonsmooth optimization, conic programming, o-minimal geometry, parametric optimization, variational analysis.}\\ 
[1.5ex]
\noindent
{\bf\small 2020 Mathematics Subject Classification.} {\small Primary 90C31; Secondary 03C64, 49J53, 65K05, 90C26, 90C30}.
% \tableofcontents
%%%%%%%%%%%%%%%%%%%%%%%%%%%%%%%%%%%%%%%%%%%%%%%%%%%%%%%%%%%%%%%%%%%%%%%%%%%%%%%%%%%%%
%%%%%%%%%%%%%%%%%%%%%%%%%%%%%%%%%%%%% Content starts %%%%%%%%%%%%%%%%%%%%%%%%%%%%%%%%
%%%%%%%%%%%%%%%%%%%%%%%%%%%%%%%%%%%%%%%%%%%%%%%%%%%%%%%%%%%%%%%%%%%%%%%%%%%%%%%%%%%%
\section{Introduction}

\runinsubsectionstar[\normalsize]{Parametric optimization and the adjoint method} Parametric optimization problems are central in optimization \cite{bonnans1998optimization}, control \cite{lemos2025parametric}, and engineering \cite{castillo2006optimal}. They also arise naturally in modern machine learning, where one seeks to tune hyperparameters, architectures, regularization levels, or embedded optimization layers through the optimization of an induced value function; see \cite{pedregosa2016hyperparameter, bai2019deep}. In  these situations, the quantity of interest can often be recast  in the parametric form 
\begin{align}\label{eq:closedcone}
    \inf_{\pvar \in \Rn} \left\{F(\pvar, \param): C(\pvar, \param) \in K\right\},
\end{align}
where the parameter $\param$ ranges in \(\Rq\), \(F\colon \Rn \times \Rq \to \R\) is locally Lipschitz,  the constraint function \(C\colon \Rn \times \Rq \to \R^m\) is continuously differentiable and $K$ is a closed convex cone in $\R^m$.  For simplicity of exposition, we consider momentarily the following simpler instance 
\begin{align}\label{eq:fvdef}
    f(\param) \coloneq \inf_{\pvar \in \Rn} \left\{F(\pvar, \param): G(\pvar, \param) \leq 0,  H(\pvar, \param) = 0 \right\},
\end{align}
 where the constraint functions \(G\colon \Rn \times \Rq \to \R^m\) and \(H\colon \Rn \times \Rq \to \R^p\) are continuously differentiable, and the inequality is understood coordinate-wise.  A central difficulty in optimizing the value function $f$ is to obtain useful first-order information at a reasonable computational cost. In principle, if one denotes by 
\begin{align*}
x^*(\param) \in \argmin_{x \in \mathbb{R}^n}
\left\{F(x,\param): G(\pvar, \param) \leq 0,  H(\pvar, \param) = 0\right\},
\end{align*}
a selection of minimizers, the value writes
$
f(\param)=F(x^*(\param),\param)$. Thus, 
if the mapping $\param \mapsto x^*(\param)$ were differentiable, the chain rule would give
\[
\nabla f(\param)
= \transp{\nabla_{\!x} F(x^*(\param),\param)}
\frac{d x^*(\param)}{d\param}
+ \nabla_{\!\param} F(x^*(\param),\param).
\]
However, this approach raises both computational and conceptual difficulties. Computationally, differentiating through a solver is generally expensive, while conceptually it relies on strong regularity assumptions, such as those required by the implicit function theorem.

The adjoint state method, introduced by Céa \cite{cea1986conception}, avoids computing the derivative of the solution mapping and does not resort to convex duality \cite{rockafellar1974conjugate, ekeland1974analyse}. Denoting $(x^*(\param),\lambda^*(\param),\mu^*(\param))$ a solution of the KKT system of \eqref{eq:fvdef}, the adjoint formula writes (see \cite{cea1986conception})
\begin{equation}
    \label{form:asm}
\nabla f(\param)
=
\nabla_{\!\param} F(x^*(\param),\param)
+
\jac_{\param}\!\transp{G(x^*(\param),\param)}\dvar^*(\param)
+
\jac_{\param}\!\transp{H(x^*(\param),\param)}\dvarr^*(\param).
\end{equation}
Remarkably, at a negligible additional cost---see Remark~\myref{rmk:costdualvar}---associated with computing Lagrange multipliers, this formula does not involve the derivative of the primal-dual solution mapping. Yet, the classical validity analysis for the adjoint formula requires solutions of the KKT system to be unique and differentiable. To our knowledge, this requirement has not been relaxed; see the discussion in the related work section and Example~\myref{ex:failclarke}.
%%%%%%%%%%%%%%%%%%%%%%%%%%%%%%%%%%%%%%%%%%%%%%
\runinsubsectionstar[\normalsize]{The adjoint formula in the  definable world}
The objective of this work is to establish simple and versatile guarantees ensuring that the right-hand side of the adjoint formula \eqref{form:asm} is a gradient-type quantity for the value function $f$.

For nonlinear programming problems \eqref{eq:fvdef}, our results have an appealing form: assuming that the Mangasarian–Fromovitz constraint qualification holds and that all the mappings $F,G,H$ belong to a common o-minimal structure (for example, are semialgebraic), the formula yields a conservative field, and in particular a gradient almost everywhere. We actually show that a similar result holds in the much more general setting \eqref{eq:closedcone}, under Robinson's qualification and o-minimality.

Our approach  relies on two key ingredients: definability and conservative calculus. Definability provides a natural framework that encompasses most optimization and machine learning models \cite{bolte2021conservative, davis2020stochastic}, while offering powerful stability and regularity properties that are essential for our analysis. Conservative calculus, on the other hand, supplies generalized differential objects suited to nonsmooth calculus and has already led to strong applications in optimization and learning; see \cite{bolte2021conservative,davis2020stochastic,pauwels2023conservative}. The use of conservative fields for the value function is essential here: the formal application of the adjoint formula in \eqref{form:asm} produces spurious outputs, even for smooth data $F,G,H$ resulting in a smooth value function $f$; see the very simple Example~\myref{ex:failclarke}. This is very common in nonsmooth differential calculus. Nonetheless, the outputs of the adjoint formula \eqref{form:asm} form a conservative field; in particular, we obtain a gradient almost everywhere. 

To conclude, we emphasize that, in the absence of definability, the adjoint formula may fail to carry variational meaning for the value function, even for locally Lipschitz, path differentiable and (point-wise) differentiable objective function with smooth constraints; see Section~\myref{sect:failure}.

\runinsubsectionstar[\normalsize]{The adjoint formula and applications}  Although primarily theoretical, this work provides a rigorous foundation for the practical use of the adjoint state method in settings where smoothness or uniqueness of solutions cannot be guaranteed. Such situations arise in a wide range of applications, including robust optimization \cite{ben2002robust}, uncertainty quantification \cite{allaire2015review}, optimal design of structures and systems \cite{allaire2015review}, parametric decomposition \cite{gauvin1978method,demiguel2008decomposition}, PDE-constrained optimization \cite{HechtLanceTrelat2026}, and discretized neural ODEs \cite{chen2018neural}. A detailed investigation of these applications is beyond the scope of the present paper. We also point out that the natural connections between conservativity, differentiable programming, and algorithmic differentiation make our approach amenable to modern computational frameworks \cite{bolte2020mathematical,bolte2021conservative}. In Section~\myref{sec:diff-prog-and-appli}, we describe these connections with algorithmic differentiation as well as comparisons with alternatives to the adjoint formula in this context.

We discuss the computational cost of our adjoint approach from the perspective of differentiable programming, comparing it with other standard methods that are generally more costly; see Section~\myref{sec:diff-prog-and-appli}. 
To illustrate further this connection with applications, we provide a basic gradient-like algorithm based on the adjoint method in Section~\myref{sect:implicopt}. Our results show that the gradient method based on \eqref{form:asm} as a first order oracle, exhibits a minimizing behavior: generically, it is attracted to critical points of the value function $f$ for vanishing step sizes; see Proposition~\myref{cor:smallStepMethod}. This result can be extended to more advanced first order optimization algorithms; see Section~\myref{sect:implicopt}. 
% %%%%%%%%%%%%%%%%%%%%%%%%%%%%%%%%%%%%%%%%%%%%%%%%%%%%%%%%%%%%%%%%

\runinsubsectionstar[\normalsize]{Related work on the adjoint method} The term ``adjoint'' comes from the adjoint-state in the literature on control and PDE-constrained optimization, in particular shape optimization. The method is closely related to the field of sensitivity analysis in constrained programming and to the notion of parametric optimality in nonsmooth analysis. Let us discuss references and connections to our work.

A general presentation of sensitivity analysis for shape optimization can be found in \cite{zolesio2011shapes}. The adjoint formula in \eqref{form:asm} was presented, to our knowledge, in the context of shape optimization \cite{cea1986conception}. In this context, the adjoint formula naturally occurs in infinite dimension and its main objects of interest are directional derivatives. Subsequent developments \cite{delfour1988shape} were based on the connection with min-max optimization \cite{correa1985directional}.
Recent extensions can be found in \cite{delfour2017parametric, delfour2019control, delfour2025semidifferential}. These developments share a lot of similarities with results developed for sensitivity analysis of constrained programs.

Sensitivity analysis is a broad field of research, which goes beyond shape analysis, a general account is given in \cite{bonnans2000perturbation} in infinite dimension. We focus the discussion on connections with the adjoint formula in \eqref{form:asm} applied to problem \eqref{eq:fvdef}, which initially occurred in a nonlinear programming context. Early results date back to Gauvin \cite{gauvin1977differential} with many subsequent extensions \cite{fiacco1979extensions,Auslender1979,Janin1984} see the review in \cite{pacaud2025sensitivity} . A naive application of these results to our problem \eqref{form:asm} would lead, at best, to only directional derivatives, rather than a directly computable gradient-like object for the value function, as provided by our approach. Moreover, it would require, when possible, solving costly optimization problems over the sets of primal and dual solutions. Directly obtaining a gradient using this sensitivity analysis approach requires strong uniqueness conditions as in the original argument of Céa. This is visible for example in \cite{HechtLanceTrelat2026} where uniqueness is assumed, in the context of PDE constrained optimization, and \cite{demiguel2008decomposition} where second order sufficiency conditions are assumed in the context of decomposition methods.

Beyond directional derivatives, let us mention the parametric optimality approach. \cite{hiriart1978gradients} and \cite[Theorem 10.13]{rockafellar1998variational} established that the Clarke subdifferential of the value function is included in a convex set constructed with the subdifferential of the Lagrangian; see the discussion in Section~\myref{sect:parametricOptimality}. However, as noted in the simple Example~\myref{ex:failclarke}, this inclusion is strict in general, showing that subdifferential theory is not adapted to this subject. A limitation that this paper overcomes through conservative gradients.
% %%%%%%%%%%%%%%%%%%%%%%%%%%%%%%%%%%%%%%%%%%%%%%%%%%%%%%%%%%%%%%%%
\runinsubsectionstar[\normalsize]{Notations} Let \(\ell, L \in \N\) be natural numbers. We denote by \(\scalarp{\cdot, \cdot}\) the canonical Euclidean scalar product on \(\Rell\) and \(\norm{\cdot}\) its associated norm. For \(r>0\) and \(\vx \in \Rell\), we denote by \(\Bcal_c(\vx,r)\) the closed ball centered at \(\vx\) with radius \(r\), and by \(\Bcal(\vx,r)\) its open counterpart. Let \(A \subset \Rell\). We write \(\cl A\), \(\interior A\), and \(\conv A\) for the closure, interior, and convex hull of \(A\), respectively. The Jacobian and partial Jacobian of a vector-valued function \(\varphi\colon \Rell \to \R^{L}\) at \(\vz = (\vx, \vy) \in \Rell\) are denoted by \(\jac\varphi(\vz)\) and \(\jac_{x}\!\varphi(\vx, \vy)\), respectively. In the same way, the gradient and partial gradient of a real-valued function \(\varphi\colon \Rell \to \R\) at \(\vz = (\vx, \vy) \in \Rell\) are written as \(\nabla \varphi(\vz)\) and \(\nabla_{\!x}\varphi(\vx, \vy)\), respectively. For \(s,t\in \R\), we will sometimes use the notation \( \varphi(t^+)\) for \(\displaystyle \lim_{s\downarrow t} \varphi(s) \coloneq \lim_{s>t, s \to t} \varphi(s)\) and the notation \( \varphi(t^-) \) for \( \displaystyle \lim_{s\uparrow t} \varphi(s) \coloneq \lim_{s<t, s \to t} \varphi(s)\).
%%%%%%%%%%%%%%%%%%%%%%%%%%%%%%%%%%%%%%%%%%%%%%%%%
%%%%%%%%%%%%%%%%%%%%%%%%%%%%%%%%%%%%%%%%%%%%%%%%
%%%%%%%%%%%%%%%%%%%%%%%%%%%%%%%%%%%%%%%%%%%%%%%%%%%%%%%%%%%%%%%%
% %%%%%%%%%%%%%%%%%%%%%%%%%%%%%%%%%%%%%%%%%%%%%%%%%%%%%%%%%%%%%%%%
\section{The adjoint state method: smooth case}\label{sect:diffcase}

This section studies the adjoint state method in the setting where the objective function $F$ is continuously differentiable, in the presence of equality and inequality constraints and possibly a continuum of minimizers. We focus here on the smooth case for simplicity and will expand on nonsmooth objectives in Section~\myref{sect:generalresult}. 
%%%%%%%%%%%%%%%%%%%%%%%%%%%%%%%%%%%%%%%%%%%%%%%%%
%%%%%%%%%%%%%%%%%%%%%%%%%%%%%%%%%%%%%%%%%%%%%%%%
\subsection{Preliminaries} 
We consider the following parametric nonlinear programming problem:
\begin{align}\label{prob:cpo}
\begin{aligned}
\minimize_{\pvar \in \Rn}\quad & F(\pvar, \param) \\
\st\quad      & G(\pvar,\param)\le 0,\\
              & H(\pvar,\param)=0,
\end{aligned}
\end{align}
where \(\param\in\Rq\). The objective function \(F:\Rn\times\Rq\to\R\) and the constraint mappings
\[
G(\pvar,\param)=\transp{\big(g_1(\pvar,\param),\ldots,g_m(\pvar,\param)\big)}, 
\qquad 
H(\pvar,\param)=\transp{\big(h_1(\pvar,\param),\ldots,h_p(\pvar,\param)\big)}
\]
are all continuously differentiable, the inequality being understood coordinate-wise. For any \((\pvar, \param) \in \Rn \times \Rq \), let \(I(\pvar, \param) \subset \{1, \ldots, m\}\) be the set of active (inequality) constraints, i.e., for all \(i \in I(\pvar, \param)\), \(g_i(\pvar, \param) = 0\). The \emph{Lagrangian} of Problem \eqref{prob:cpo}, \(\Lcal\colon  \Rn \times \Rm \times \Rp \times \Rq \to \R \), writes
\begin{equation*}
    \Lcal(\pvar, \dvar, \dvarr, \param) \coloneq F(\pvar, \param) + \transp{\dvar} G(\pvar, \param) + \transp{\dvarr} H(\pvar, \param).
\end{equation*}
 
\begin{definition}[\emph{Mangasarian-Fromovitz constraint qualification}]\label{def:mfcq}
   Let \((\pvar, \param) \in \Rn \times \Rq\) such that \(G(\pvar, \param) \leq 0,  H(\pvar, \param) = 0\). The \emph{MFCQ} at \((\pvar, \param)\) states that the following implication holds
    \begin{align}\label{eq:22122025b}
     &\sum_{i \in I(\pvar, \param)} \dvar_i \nabla_{\!\pvar} g_i(\pvar, \param) + \transp{\jac_{\pvar}\!H(\pvar, \param)}\mu = 0, \text{ and } \dvar_i \geq 0, \forall\, i \in  I(\pvar, \param) \nonumber \\
     &\implies \mu = 0, \dvar_i = 0, \forall\, i \in  I(\pvar, \param).
    \end{align}
\end{definition}

\begin{remark}
    \label{rem:jointMFCQ}
    If the MFCQ is satisfied at \((\pvar, \param)\), then the following holds for the joint Jacobians in \((\pvar,\param)\), i.e,
    \begin{align*}
       &\transp{\jac\!G(\pvar, \param)}\dvar + \transp{\jac\!H(\pvar, \param)}\dvarr = 0, \text{ and } \dvar_i \geq 0, \forall\, i \in  I(\pvar, \param) \nonumber \\
     &\implies \dvarr = 0, \dvar_i = 0, \forall\, i \in  I(\pvar, \param).
    \end{align*} 
    In other words, the MFCQ holds taking into account the joint dependence in \(\pvar,\param\), not just the partial dependence in \(\pvar\)
\end{remark}

\begin{assumptions}\label{ass:specialcase}
The following assumptions hold throughout the current section, Section~\myref{sect:diffcase}.
\begin{enumerate}[label=\rm{(\roman*)}, ref=\rm{\roman*}, align=left, labelwidth=*, leftmargin=*, labelindent=0.5em, labelsep=-0.4em]

    \item The objective function \(F\) is continuously differentiable and semialgebraic, i.e., its graph is described by polynomial inequalities and equalities. The constraint functions \(G\) and \(H\) are also continuously differentiable and semialgebraic. 
    
    \item\label{ass:as2} The MFCQ is satisfied at any \((\pvar, \param) \in \Rn \times \Rq\) such that \(G(\pvar, \param) \leq 0,  H(\pvar, \param) = 0\).
    
    \item\label{ass:as3} For all \(\param \in \Rq\), the solution set 
    \[\{\pvar \in \Rn: F(\pvar, \param) = f(\param), G(\pvar,\param) \leq 0, H(\pvar, \param) = 0\}
    \] 
    is nonempty  and bounded by \(M_{\param} \geq 0\).
    In addition, for all \(\param \in \Rq\), there exist a neighborhood \(\Ncal\) of \(\param\) and \(M \geq 0\) such that for any \(\bar{\param} \in \Ncal\), \(M_{\bar{\param}} \leq M\).
\end{enumerate}
\end{assumptions}
\noindent We consider a selection mapping, \(\solvp \colon \Rq \to \Rn\), such that for all \(\param \in \Rq\), 
\begin{equation}\label{eq:solverp}
\solvp(\param) \in \{\pvar \in \Rn: F(\pvar, \param) = f(\param), G(\pvar,\param) \leq 0, H(\pvar, \param) = 0\}, 
\end{equation}
Thanks to Assumption~\myhyperref{ass:specialcase}{ass:as3}, for any \(\param \in \Rq\), the set in \eqref{eq:solverp} is nonempty, so \(\solvp\) is well defined. We use \(\solvp\) to represent a solver that returns, for a given \(\param \in \Rq\), a primal solution to the problem \eqref{prob:cpo}.

Let us define similarly a primal-dual solver, \(\solvpd \colon \Rq \to \Rn \times \Rm \times \Rp\), such that for all \(\param \in \Rq\),
\begin{align}\label{eq:solverpd}
    \solvpd(\param) &\in \big\{(\pvar,\dvar,\dvarr)\in \Rn \times \Rm \times \Rp: F(\pvar, \param) = f(\param), G(\pvar,\param) \leq 0, H(\pvar, \param) = 0,\nonumber \\
    &\qquad \dvar \geq 0,  \transp{G(\pvar,\param)}\dvar = 0, \text{ and } \nonumber \\
    &\qquad \nabla_{\!\pvar}F(\pvar, \param) + \jac_{\pvar}\!\transp{G(\pvar, \param)}\dvar + \jac_{\pvar}\!\transp{H(\pvar, \param)}\dvarr = 0\big\}.
\end{align}

In the definition of \(\solvpd\), the first constraint indicates that $x$ achieves the minimal value in \eqref{eq:fvdef} and the rest form the KKT system: feasibility, complementarity and stationarity. 
By Assumptions~\myeqhyperrefrange{ass:specialcase}{ass:as2}{ass:as2}{ass:as3}, for any \(\param \in \Rq\), the set in \eqref{eq:solverpd} is nonempty: the MFCQ is sufficient to ensure that KKT conditions are necessary for optimality (see, for example, \cite[Corolary 6.15]{rockafellar1998variational}), and \(\solvpd\) is well defined. For each \(\param \in \Rq\), the mapping \(\solvpd\)  outputs a primal solution $x$ to Problem \eqref{prob:cpo} and one of its associated Lagrange multipliers, a dual solution $(\dvar,\dvarr)$.
%%%%%%%%%%%%%%%%%%%%%%%%%%%%%%%%%%%%%%%%%%%%%%%%%%%%%%%%%%%%%%%%
%%%%%%%%%%%%%%%%%%%%%%%%%%%%%%%%%%%%%%%%%%%%%%%%%%%%%%%%%%%%%%%%
\subsection{The adjoint method}
We propose the use of the adjoint formula from \eqref{form:asm}, without imposing any uniqueness or smoothness requirement neither on the solutions to \eqref{prob:cpo}, nor on their associated dual/adjoint variables (Lagrange multipliers).
\renewcommand{\thealgorithm}{ASM}
\begin{algorithm}[H]
\captionsetup{name=Method}
\caption{Adjoint State Method}\label{alg:asm}
\begin{algorithmic}[1]
\Require \(\bar{\param} \in \Rq\)
\State  \((\pvar^*, \dvar^*, \dvarr^*) \gets \solvpd(\bar{\param})\). %
\State Compute:
\begin{align}\label{eq:adjcea}
    \vu \gets \nabla_{\!\param} \Lcal(\pvar^*, \dvar^*, \dvarr^*, \bar{\param}) = \nabla_{\!\param}F(\pvar^*, \bar{\param}) + \jac_{\param}\!\transp{G(\pvar^*, \bar{\param})}\dvar^* + \jac_{\param}\!\transp{H(\pvar^*, \bar{\param})}\dvarr^*.
\end{align}
\Ensure \(\vu\) \qquad \mbox{(surrogate gradient for the value function)}
\end{algorithmic}
\end{algorithm}
Let us discuss the connection with the adjoint formula as originally formulated by C\'ea \cite{cea1986conception}, under equality constraints.
We have, for \((\pvar^*, \dvar^*, \dvarr^*) = \solvpd(\bar{\param})\), that
\begin{align}\label{eq:adjceatwo}
    \jac_{\pvar}\!\transp{G(\pvar^*, \bar{\param})}\dvar^* + \jac_{\pvar}\!\transp{H(\pvar^*, \bar{\param})}\dvarr^* = - \nabla_{\!\pvar}F(\pvar^*, \bar{\param}).
\end{align}
In \eqref{eq:adjceatwo} and \eqref{eq:adjcea}, one can recognize the operations of the adjoint state method proposed by \cite[Equations (2.7) and (2.9)]{cea1986conception} with \(m = 0\) (no inequality constraint). The theoretical validity of the formula in \cite{cea1986conception}, is based on the uniqueness and differentiability of both the primal and dual solutions, essentially requiring strong second order sufficiency conditions. The use of conservative fields enables us to consider the exact same formula as \cite{cea1986conception}, without these strong conditions, allowing for possible nonsmooth or non-unique primal-dual solutions.

The following is our main result regarding the adjoint state method~\emph{\myref{alg:asm}} for smooth objectives. It is a particular case of Theorem~\myref{theo:main} in Section~\myref{sect:generalresult}, where the result is proved. 

\begin{theorem}\label{theo:mainsmooth}
    Let Assumptions~\myref{ass:specialcase} stand. Then, the adjoint state method~\emph{\myref{alg:asm}} is a selection of a \hyperref[def:conservative]{conservative~field\,} for \(f\), i.e.,
    \[
    \emph{ASM}(\param) \in D_f(\param) \quad \forall\,\param \in \Rq,
    \]
    where \(D_f\) is a conservative~field for \(f\). In particular, for any locally Lipschitz $\param \colon \R \to \Rq$, we have
    \begin{align*}
        \frac{d}{dt} f(\param(t)) = \left\langle \emph{ASM}(\param(t)), \dot{\param}(t) \right\rangle \text{ for almost all } t \text{ in } \R.
    \end{align*}
\end{theorem}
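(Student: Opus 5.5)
Define the candidate field as the set of all outputs of \emph{\myref{alg:asm}}:
\[
D_f(\param) \coloneq \conv\big\{\nabla_{\!\param}\Lcal(\pvar,\dvar,\dvarr,\param) : (\pvar,\dvar,\dvarr)\in S(\param)\big\},
\]
where $S(\param)$ is the set appearing in \eqref{eq:solverpd}. We show that $D_f$ is a conservative field for $f$. By construction $\mathrm{ASM}(\param)\in D_f(\param)$, and the chain rule along absolutely continuous curves is the defining property of conservativity. The proof therefore has three steps.

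\textbf{Step 1: regularity.} First, $f$ is locally Lipschitz and semialgebraic. Semialgebraicity follows from Tarski--Seidenberg, since $f$ is an infimum over a semialgebraic family. The Lipschitz property follows from MFCQ (metric regularity of the constraint system, uniform near $\param$ by Robinson--Ursescu/Remark~\myref{rem:jointMFCQ}) together with the local uniform bound $M$ of Assumption~\myhyperref{ass:specialcase}{ass:as3}. Next, $S$ has closed graph and is locally bounded. Primal boundedness comes from (iii). Multiplier boundedness comes from MFCQ: if multipliers blew up along a sequence, normalizing would give a nonzero nonnegative combination violating \eqref{eq:22122025b} at a limit point. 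Closedness of the graph uses continuity of $f$ and of the data. Hence $D_f$ is locally bounded, has closed graph, and has nonempty compact convex values. It is also semialgebraic.

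\textbf{Step 2: variational chain rule.} It suffices to prove the following. For any absolutely continuous $\param(\cdot)$ and almost every $t$,
\[
\tfrac{d}{dt}f(\param(t)) = \langle v,\dot\param(t)\rangle \quad \text{for every } v\in D_f(\param(t)).
\]
Since $D_f$ is semialgebraic, the definable conservativity criterion in \cite{bolte2021conservative} applies. It reduces the claim to showing that $D_f(\param)=\{\nabla f(\param)\}$ on a dense open full-measure set, with graph closedness handling the rest. A cleaner route exploits $f(\param')\le \Lcal(\pvar,\dvar,\dvarr,\param')$ for $\pvar$ feasible at $\param'$. Fix $(\pvar,\dvar,\dvarr)\in S(\param)$ and a direction $d$. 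Using MFCQ, build a feasible curve $\pvar(s)$ at $\param+sd$ with $\pvar(s)=\pvar+s w+o(s)$. This gives the Danskin-type upper bound $f'(\param;d)\le \langle \nabla_\param \Lcal,d\rangle + \langle \nabla_\pvar\Lcal, w\rangle$, and the last term vanishes by stationarity. Complementarity handles the active inequalities: $\dvar_i>0$ forces $g_i$ to remain active, which needs care. So $f'(\param;d)\le \langle u,d\rangle$ for every $u\in D_f(\param)$, at points where $f$ is differentiable. Applying the same bound to $-d$ gives equality, hence $D_f(\param)=\{\nabla f(\param)\}$ wherever $f$ is differentiable, which is almost everywhere.

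\textbf{Step 3: the main obstacle.} The hard part is the one-sided estimate with inequality constraints. Keeping the perturbed point feasible while controlling $\transp{\dvar}G$ requires the term $\transp{\dvar}G(\pvar(s),\param+sd)$ to be $o(s)$ or nonpositive. My first attempt is to choose $w$, via MFCQ joint with respect to $(\pvar,\param)$, so that the linearized active constraints satisfy $\nabla g_i\cdot(w,d)\le 0$, and then project onto the feasible set using metric regularity. The first-order change of $\Lcal$ is then bounded by $\langle u,d\rangle$ plus nonpositive terms, since $\dvar\ge 0$, which is the correct sign for the upper bound. Finally, to pass from the almost-everywhere gradient identification to the full conservative property, I would invoke the result of \cite{bolte2021conservative}: a definable, closed-graph, locally bounded, convex-valued map that equals $\nabla f$ almost everywhere along every definable stratum is conservative. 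The stratum-wise version (tangent projections on strata of a Whitney stratification) is argued in the same way, using curves inside strata.
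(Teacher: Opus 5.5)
\textbf{There is a genuine gap in your Steps 2--3.} Step 1 is fine; the problem is the central claim that $f'(\param;d)\le\langle u,d\rangle$ for \emph{every} $u$ produced by a KKT triple, and hence that $D_f(\param)=\{\nabla f(\param)\}$ wherever $f$ is differentiable. That claim is false.

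The paper's own Example~\ref{ex:failclarke} is a counterexample. First add, e.g., the constraints $x_1\ge -2$ and $x_2\ge -\param^2-1$, so that Assumptions~\ref{ass:specialcase} hold; MFCQ persists and $f\equiv 0$ is unchanged. Then $f$ is smooth with $f'(0)=0$. Yet at $x^*=(-1,0)$, every $\lambda_2+\lambda_3=1$ with $\lambda\ge0$ is a multiplier, so $\nabla_\param\Lcal=\lambda_3-\lambda_2$ sweeps all of $[-1,1]$.

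The failure comes from a sign error. Using stationarity and $\jac_x H\,w+\jac_\param H\,d=0$, you get
\[
F(x(s),\param+sd)=f(\param)+s\big[\langle u,d\rangle-\transp{\dvar}(\jac_x G\,w+\jac_\param G\,d)\big]+o(s).
\]
First-order feasibility gives $\nabla g_i\cdot(w,d)\le 0$ on active constraints. Together with $\lambda\ge0$, this makes the correction term \emph{nonnegative}, not nonpositive, so it destroys the upper bound. To kill it you would need $\nabla g_i\cdot(w,d)=0$ whenever $\lambda_i>0$, and MFCQ does not make that system solvable. In the example, take $\lambda=(0,1,0)$ and $d=1$. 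Equality on $g_2=x_2-\param$ forces $w_2=1$, while feasibility of $g_3=x_2+\param$ requires $w_2\le -1$. This is why Gauvin-type upper bounds on directional derivatives carry a $\max$ over multipliers rather than holding for an arbitrary one.

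\textbf{Your reduction would not conclude even with a correct a.e.\ identification.} A definable, graph-closed, locally bounded map that agrees with $\nabla f$ on a dense open full-measure set need not be conservative. Take $f\equiv 0$ on $\R^2$, and let $D=[-1,1]\times\{0\}$ on the line $\{\param_2=0\}$ and $D=\{0\}$ elsewhere. Along the curve $t\mapsto(t,0)$ the chain rule fails.

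The stratified characterization you mention at the end requires $P_{T_M(\param)}D_f(\param)=\{\nabla_M f(\param)\}$ at \emph{every} point of every stratum of some stratification. That is exactly a pointwise statement your estimate cannot deliver, and you give no mechanism for pushing the bad points (such as $\param=0$ above) into lower-dimensional strata.

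\textbf{How the paper avoids pointwise identification.}
\begin{enumerate}
\item It reduces, via \cite{pauwels2023conservative}, to $C^1$ definable curves $\param(t)$.
\item It picks bounded definable selections $(x(t),\lambda(t))\in\solpd(\param(t))$, which are $C^1$ off finitely many $t$.
\item It differentiates $f(\param(t))=\Lcal(x(t),\lambda(t),\param(t))$.
\item It kills $\langle\dot\lambda(t),C(x(t),\param(t))\rangle$ by Lemma~\ref{lem:complementarityCurves}. This is where ``constraints with positive multipliers stay active'' is handled automatically.
\item It removes the $\dot x$ term by stationarity.
\item It passes from one selection to all of $D_f(\param(t))$ with the density Lemmas~\ref{lem:edlem8}--\ref{lem:edlem9}.
\end{enumerate}
Only almost-every-$t$ information along curves is needed, and the example shows this cannot be upgraded to a pointwise statement at differentiability points of $f$.
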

The conservative field \(D_f\) in Theorem~\myref{theo:mainsmooth} is rigourously defined in Theorem~\myref{theo:main}. Formally, it consists of all the possible outputs of the method~\myref{alg:asm} for all possible choice of \(\solvpd\) as in the right hand side of \eqref{eq:solverpd}.

The same result applies \textit{mutatis-mutandis} to more general constraints of the form \(C(\pvar,\param) \in K\) for a closed convex cone \(K\). In this section, we focus on the simpler and more widespread form \eqref{eq:fvdef} for simplicity; see Section~\myref{sect:generalresult} for a general statement. This allows to cover conic programs such as paramteric second order cone programs (SOCP) and semidefinite programs (SDP).

We highlight the following important properties of conservative fields.

\begin{remark}[On conservative fields]\label{rmk:consequenceasm} The result in Theorem~\myref{theo:mainsmooth} allows to invoke generic properties of conservative fields. Among them, let us highlight the following.
\begin{enumerate}[label=\rm{(\roman*)}, ref=\rm{\roman*}, align=left, labelwidth=*, leftmargin=*, labelindent=0.5em, labelsep=-0.4em]

    \item Conservative fields represent a notion of generalized gradients \cite{bolte2021conservative}. Theorem~\myref{theo:mainsmooth} provides a rigorous mathematical framework and analysis that justify the use of the adjoint state method without any uniqueness or differentiability assumptions on the primal/state and dual/adjoint solutions. The standard notions of subdifferential from nonsmooth analysis \cite{rockafellar1998variational} are insufficient to describe the variational properties of the adjoint method under those conditions; see Example~\myref{ex:failclarke}. This justify the relevance of the notion of conservative field in this context.

    \item For general locally Lipschitz semialgebraic functions, like \(f\), selections of conservative fields can be used for optimization. They carry sufficient variational information to serve as oracle for first-order methods with qualitative guaranties; see \cite{bolte2021conservative, xiao2024adam}. This fact enables, for instance, parameter optimization for \eqref{prob:cpo} through first-order methods by employing the adjoint state formula to efficiently compute a gradient-like oracle for \(f\) at each iteration; see Section~\myref{sect:implicopt}.
    
    \item Conservative fields are compatible with differentiable programming paradigms such as algorithmic differentiation \cite{bolte2021conservative, bolte2020mathematical}. This means that, in practice, \myref{alg:asm} can be combined with further operations in a larger numerical program, in a way which is compatible with algorithmic differentiation as implemented for example in Python libraries such as Pytorch; see Section~\myref{sec:diff-prog-and-appli}.

    \item The combination of \cite[Proposition 2.1.2]{clarke1983nonsmooth} and \cite[Corollary 1]{bolte2021conservative} allows to obtain the following bound on Dini directional derivatives for any \(\param,d \in \Rq\):
    \begin{align*}
        &\min_{v \in \consv_f(\param)} \left\langle v, d\right\rangle \leq \liminf_{t\downarrow0}\frac{f(\param + td) - f(\param)}{t} \leq \limsup_{t\downarrow0}\frac{f(\param + td) - f(\param)}{t} \leq \max_{v \in \consv_f(\param)} \left\langle v, d\right\rangle.
    \end{align*}
    This is reminiscent of the sensitivity analysis performed, for example, in \cite{gauvin1977differential,bonnans2000perturbation,pacaud2025sensitivity}, where more precise results on directional derivatives are proposed.
\end{enumerate}
\end{remark}

\begin{remark}[On the semialgebraic assumption]
More generally, Theorem~\myref{theo:mainsmooth} and all its consequences in Remark~\myref{rmk:consequenceasm} remain valid if all the functions, \(F, G \text{ and } H\), are definable in the same o-minimal structure; see Definition~\myref{def:definable}. This allows to include transcendental operations in the definition of \(F, G \text{ and } H\), such as exponential or logarithm, and covers virtually all applications which can be considered in practice.
\end{remark}

\begin{example}\label{ex:failclarke} The following example shows that the (Clarke) subdifferential cannot capture all the information of the output of the adjoint formula in \myref{alg:asm} as the latter may produce artifacts which do not carry any standard variational meaning.

Set $n=2$ and $m=3$ with
\begin{align*}
    F(\pvar,\param) &= \pvar_1 \pvar_2,
    &G(\pvar,\param) = ( \pvar_1,
        \pvar_2 - \param,
        \pvar_2 + \param
    )\!\transp\;.
\end{align*}
The constraints are expressed equivalently $\pvar_1 \leq 0$ and $\pvar_2 \leq - |\param|$. We have $f(\param) = 0$ for all $\param \in \R$, and the MFCQ holds. For $\bar{\param} = 0$, we have the following KKT solution
\begin{align*}
    \pvar^* &= 
    \begin{pmatrix}
        -1 \\ 0
    \end{pmatrix}, &
    \dvar^* 
    = \begin{pmatrix}
        0\\ 0\\ 1
    \end{pmatrix},
\end{align*}
such that $\nabla_{\!\param} \Lcal(\pvar^*, \dvar^*, \bar{\param}) = 1$, which is not a subgradient of $f$.
\end{example}
%%%%%%%%%%%%%%%%%%%%%%%%%%%%%%%%%%%%%%%%%%%%%%%%%%%%%%%%%%%%%%%%%%%%%%%%%%%%%%%%%
%%%%%%%%%%%%%%%%%%%%%%%%%%%%%%%%%%%%%%%%%%%%%%%%%%%%%%%%%%%%%%%%%%%%%%%%%%%%%%%%%

\subsection{Algorithmic differentiation and alternative methods}\label{sec:diff-prog-and-appli}

Algorithmic differentiation implements the idea of differentiating numerical programs in an efficient and user friendly way \cite{griewank2008evaluating}. This is one of the main building blocks of neural network training in machine learning. In its most simple form, a numerical program is simply a composition $P = f_1 \circ f_2 \circ \ldots \circ f_L$, where each $f_i$ is taken from a dictionary of basic numerical functions, compatible with the differentiable programming framework. The user specifies the structure of the composition with a numerical program, and algorithmic differentiation combines automatically derivative objects $d_1,d_2\ldots,d_L$ associated to each $f_i$, to evaluate the derivative of $P$ numerically. Theorem~\myref{theo:mainsmooth} states that for any $\theta$, $\text{ASM}(\param) \in D_f(\param)$, where $f$ is the value function in \eqref{eq:fvdef}, and $D_f$ is a conservative field for $f$. Conservative fields are compatible with algorithmic differentiation \cite{bolte2021conservative}. Therefore, an interpretation of Theorem~\myref{theo:mainsmooth} is that $\text{ASM}$ can be used as a gradient surrogate $d$ for the value function $f$ in a differentiable programming framework. Let us discuss potential alternatives from the algorithmic differentiation literature \cite{griewank2008evaluating}.

For the remainder of this section \(\bar{\param} \in \Rq\) denotes a fixed set of parameters. We compare \myref{alg:asm} with the following two widely used approaches that are based on differentiating the solution to get the sensitivities of \(f\). The first approach, which we will simply call \emph{automatic differentiation}, uses reverse-mode algorithmic differentiation to obtain the derivative of the solution \(\pvar(\bar{\param})\) by differentiating through \(\solvp\). The second is \emph{implicit differentiation}, where the implicit function theorem is applied to the critical condition \(\nabla \Lcal_{1}(\solvpd(\bar{\param})) = 0\), with  \(\Lcal_{1} \colon (\pvar, \dvar, \dvarr) \mapsto \Lcal(\pvar, \dvar, \dvarr, \bar{\param})\) where \((\pvar, \dvar, \dvarr) \in \Rn \times \Rm \times \Rp\) and \(\bar{\param}\) is fixed.

In terms of assumptions, it is immediately apparent that both methods require the uniqueness and differentiability of the solutions, while \myref{alg:asm} does not. For the purpose of comparing computational costs, let us consider the favorable settings ensuring the validity of both methods. 

A straightforward way to implement \myref{alg:asm}, after getting a primal-dual solution from \(\solvpd\), is for the user to just compute \(u = \texttt{autodiff}\Lcal_{2}(\bar{\param})\), with \(\Lcal_{2} \colon \param \mapsto \Lcal(\solvpd(\bar{\param}), \param)\), where \(\bar{\param}\) is fixed, treated as a constant in the differentiation process. We use the notation \(\text{Cost}\) to denote the computational cost of implementing operations, which can be identified with computational time. According to the cheap gradient principle \cite{griewank2008evaluating},  
    \[
    \text{Cost}\big(\texttt{autodiff}\Lcal_{2}(\bar{\param})\big) \leq \omega \text{Cost}\big({\Lcal_{2}}\big) = \omega\text{Cost} \big( \Lcal \big),
    \]
    where \(\omega > 0\) is an absolute constant, \(\text{Cost}\big(\texttt{autodiff}\Lcal_{2}(\bar{\param})\big)\) is the computational cost to calculate the derivative of \(\Lcal_{2}\) through reverse-mode automatic differentiation, and \(\text{Cost}\big(\Lcal_{2}\big)\) and \(\text{Cost}\big(\Lcal\big)\) the computational cost to evaluate the value of the functions \(\Lcal_{2}\) and \(\Lcal\), respectively. Therefore, the total cost of the adjoint state method~\myref{alg:asm} is upper bounded as follows:
    \begin{equation}\label{eq:costasm}
       \text{Cost}\big(\text{ASM}\big) \leq \text{Cost}\big(\solvpd\big) + \omega\text{Cost}\big(\Lcal\big).
    \end{equation}
    Typically, the term \(\text{Cost}\big(\solvpd\big)\) dominates on the right hand side.
\begin{remark}\label{rmk:costdualvar}
    In most nonlinear programming solvers, such as SNOPT \cite{gill2005snopt} and IPOPT \cite{wachter2006implementation}, dual variables are already computed internally as part of satisfying the KKT optimality conditions, or because of the primal-dual nature of the underlying algorithms. This also applies to conic programming solvers such as SDPA \cite{yamanishita2003SDPA}, SDPT3 \cite{toh1999sdpt3} or commercial solvers such as MOSEK or KNITRO. Consequently, in such a situation, getting the dual solution in addition to the primal solution, requires little to no additional computational effort, \(\text{Cost}\big(\solvpd\big)\) is the same as \(\text{Cost}\big(\solvp\big)\) and the adjoint state formula \myref{alg:asm} is readily implementable with minimal code extension.
\end{remark}

Both vanilla automatic differentiation and implicit differentiation are respectively based on the following chain rules,
\begin{align}
    \nabla f(\bar{\param}) &= \transp{\nabla_{\!\pvar} F\big(\solvp(\bar{\param}), \bar{\param}\big)}\jac\big(\solvp(\bar{\param})\big) + \nabla_{\!\param} F\big(\solvp(\bar{\param}), \bar{\param}\big) \nonumber \\
    &= \transp{\nabla \Lcal_{1}\big(\solvpd(\bar{\param})\big)}\jac\big(\solvpd(\bar{\param})\big) + \nabla \Lcal_{2}(\bar{\param}) \nonumber,
\end{align}
The computational cost of automatic differentiation can be roughly bounded as follows:
\begin{align}\label{eq:costnaivead}
    \text{Cost}\big(\text{AD}\big) &=\text{Cost}\big(\solvp\big) + \text{Cost}\big(\texttt{autodiff}F(\solvp(\bar{\param}), \bar{\param})\big) \nonumber \\
    &\qquad + \text{Cost}\big(\texttt{autodiff}\,\solvp(\bar{\param})\big) \nonumber \\
    &\leq \text{Cost}\big(\solvp\big) + \omega\text{Cost}\big(F\big) + \omega \text{Cost}\big(\solvp\big).
\end{align}
Again, in typical situations the terms \(\text{Cost}\big(\solvp\big)\) dominates.
Finally, assuming all regularity conditions for its application \cite{amos2017optnet}, implicit differentiation has the following approximate bound on its computational cost:
\begin{align}\label{eq:costid}
    \text{Cost}\big(\text{ID}\big) &= \text{Cost}\big(\solvpd\big) + \text{Cost}\big(\texttt{autodiff}\Lcal(\solvpd(\bar{\param}), \bar{\param})\big)\nonumber \\ 
    &\qquad + \text{Cost}\Big(\texttt{autodiff}\big(\texttt{autodiff}\Lcal\big)(\solvpd(\bar{\param}),\bar{\param})\Big) + \text{Cost} \big(\text{LE}\big) \nonumber \\
    &\leq \text{Cost}\big(\solvpd\big)  + \text{Cost} \big(\text{LE}\big) + \omega(n+q+1)\text{Cost}\big(\Lcal\big),
\end{align}
where \text{Cost}\big(\text{LE}\big) is the computational cost of inverting a linear system.

The choice of automatic differentiation and implicit differentiations in \eqref{eq:costnaivead} and \eqref{eq:costid} depends on the number of variables and number of constraints, and possibly further characteristics of the problem. Depending on the setting, one solution may be preferable to the other, both in terms of worst case upper bounds and in practice. Note that this is a high level analysis and the actual performance may depend on additional computational factors, such as how second order derivatives are computed, how linear equations are solved, and potential approximation and parallelization strategies. Nonetheless this provides a reasonable upper bound on the computational burden for both automatic differentiation and implicit differentiation. Let us also mention that automatic differentiation requires to have access to the code of \(\solvp\), which may not be available for closed-source software.

Comparing with Equations~\eqref{eq:costasm}, it is obvious that the worst case computational cost of \myref{alg:asm} is lower, and possibly much lower, than either automatic differentiation or implicit differentiation, making it a strong competitor for the purpose of differentiating the value function of nonlinear programs. 
%%%%%%%%%%%%%%%%%%%%%%%%%%%%%%%%%%%%%%%%%%%%%%%%%%
%%%%%%%%%%%%%%%%%%%%%%%%%%%%%%%%%%%%%%%%%%%%%%%%
%%%%%%%%%%%%%%%%%%%%%%%%%%%%%%%%%%%%%%%%%%%%%%%%%%%%%%%%%%%%%%%%
% %%%%%%%%%%%%%%%%%%%%%%%%%%%%%%%%%%%%%%%%%%%%%%%%%%%%%%%%%%%%%%%%
\section{Elements of nonsmooth analysis and o-minimality}\label{sect:prelim}

In full generality, our study is conducted in a setting where the objective function \(F\) is merely locally Lipschitz. Accordingly, we begin by introducing tools required to handle nonsmoothness.

\subsection{Conservative set-valued fields}

A \emph{set-valued map} or \emph{multifunction} \(D\colon \Rell \rightrightarrows \Rs\) is a mapping from \(\Rell\) to the set of all subsets of \(\Rs\). The \emph{graph} of \(D\) is given by \(\graph D:=\{(\vx, \vy) \in \Rell \times \Rs : \vy \in D(\vx)\}\). \(D\) is \emph{locally bounded} at \(\vx \in \Rell\) if there exist a neighborhood \(\Ncal\) of \(\vx\) and \(r > 0\) such that \(\displaystyle \bigcup_{\vz \in \Ncal} D(\vz) \subset \Bcal_c(0, r)\). \(D\) is \emph{graph closed} if \(\graph D\) is a closed subset of \(\Rell \times \Rs\). Equivalently, \(D\) is graph closed if for all \((\vx_k)_{k\in \N} \subset \Rell\) and all \((\vy_k)_{k\in \N} \subset \Rs\) such that \(\vx_k \xrightarrow[k \to \infny]{} \vx\), \(\vy_k \xrightarrow[k \to \infny]{} \vy\) and \(\vy_k \in D(\vx_k)\) for any \(k \in \N\), it follows that \(\vy \in D(\vx)\).

\begin{definition}[Clarke generalized gradients \cite{clarke1983nonsmooth}]\label{def:clarkesubdiff} Let \(\varphi\colon \Rell \to \R\) be a locally Lipschitz function. By Rademacher's theorem, \(\varphi\) is differentiable on a full measure subset of \(\Rell\), say \(\Omega\). Then, the \emph{Clarke subdifferential} of \(\varphi\) is the set-valued map \(\clarke \varphi\colon \Rell \rightrightarrows \Rell\) defined as
\begin{align*}
    \vx \mapsto \conv\,\{u \in \Rell: \exists\, (u_k)_{k\in \N} \subset \Omega, u_k \xrightarrow[k \to \infny]{} \vx \text{ and }  \nabla \varphi(u_k) \xrightarrow[k \to \infny]{} u \}.
\end{align*}
 The Clarke subdifferential of \(\varphi\), like for any other locally Lipschitz real-valued function, has nonempty, convex and compact values, and is closed graph and locally bounded. 
 
 Given \((\vy,\vz) \in \Rell\), the Clarke partial subdifferential \(\clarkep{\vy}\varphi(\vy,\vz)\) of \(\varphi\) at \((\vy,\vz)\) with respect to \(\vy\) is the Clarke subdifferential of the function \(\varphi(\cdot,\vz)\) at \(\vy\).
\end{definition}

Our analysis revolves around the notion of conservative gradients that we present in the next definition. Let \(I \subset \R\) be a nonempty interval and \(\vu\colon I \to \Rell\) be a \emph{curve}. The curve \(\vu\) is said to be \emph{absolutely continuous} if \(\vu\) is differentiable Lebesgue-almost everywhere on \(I\),  its derivative \(\dot{\vu}\) is Lebesgue integrable on \(I\), and \(\displaystyle \vu(b) - \vu(a) = \int_{a}^{b} \dot{\vu}(t) dt\), for all \(a,b \in I\).

\begin{definition}[Conservative fields \cite{bolte2021conservative}]\label{def:conservative} Let \(\varphi\colon \Rell \to \R\) be a locally Lipschitz function and \(\consv_{\varphi}\colon \Rell \to \Rell\) be a locally bounded and graph closed set-valued map with nonempty values. We say that \(\consv_{\varphi}\) is a conservative field for \(\varphi\) if for any absolutely continuous curve \(\vu\colon[0, 1] \to \Rell\),
    \begin{equation}\label{eq:chainrule}
        \frac{d}{dt}\varphi(\vu(t)) = \scalarp{\dot{\vu}(t), \vv}  \quad \forall\,\vv \in \consv_{\varphi}(\vu(t)),
    \end{equation}
for almost all \(t \in [0, 1]\). Any such \(\varphi\) is called \emph{path differentiable}.
\end{definition}
%%%%%%%%%%%%%%%%%%%%%%%%%%%%%%%%%%%%%%%%%%%%%%%%%%%%%%%%%%%%%%%%
%%%%%%%%%%%%%%%%%%%%%%%%%%%%%%%%%%%%%%%%%%%%%%%%%%%%%%%%%%%%%%%%
\subsection{Definition of o-minimality } 
We briefly recall the concepts of tame geometry relevant to the class of mappings (\emph{definable mappings}) considered in the present paper. Important works on this topic include~\cite{van1996geometric, coste2000introduction}.
\begin{definition}[o-minimal structures] 
An \emph{o-minimal structure} on \( (\R, +, \cdot) \) is a collection of families 
\( \Ocal = (\Ocal_\ell)_{\ell \in \N} \), where for each 
\( \ell \in \N \), the set \( \Ocal_\ell \) is a family of subsets of 
\( \Rell \), satisfying the following properties:
\begin{enumerate}[label=\rm{(\roman*)}, ref=\rm{\roman*}, align=left, labelwidth=*, leftmargin=*, labelindent=0.5em, labelsep=-0.4em]

\item \( \Ocal_\ell \) is closed under complementation, finite unions, and finite intersections.
\item If \( A \in \Ocal_\ell \), then both 
\( A \times \R \) and \( \R \times A \) belong to 
\( \Ocal_{\ell+1} \).

\item Let \( \pi : \R^{\ell+1} \to \Rell \) be the canonical projection.  
If \( A \in \Ocal_{\ell+1} \), then \( \pi(A) \in \Ocal_\ell \).

\item The family \( \Ocal_\ell \) contains all real algebraic subsets of 
\( \Rell \), which are sets of the form
\begin{equation*}
\{ x \in \Rell : \phi(x) = 0 \},
\end{equation*}
where \( \phi\colon \Rell \to \R\) is a polynomial function.

\item The sets in \( \Ocal_1 \) are precisely the finite unions of points and intervals.
\end{enumerate}
\end{definition}

\begin{definition}[Definable sets and definable mappings]\label{def:definable}
A subset of \( \mathbb{R}^\ell \) that belongs to an o-minimal structure 
\( \Ocal \) is said to be \emph{definable} in \( \Ocal\). 
A function or a set-valued map is definable in \( \Ocal \) if its graph is definable in \( \Ocal \).
\end{definition}

\begin{example}
    The simplest o-minimal structure is given by the class of \emph{real semialgebraic sets}. 
    Recall that a set \( A \subset \mathbb{R}^\ell \) is called \emph{semialgebraic} 
    if it is a finite union of sets of the form
    \begin{equation*}
    \bigcap_{k=1}^{\bar{k}}
    \{ x \in \mathbb{R}^\ell : \phi_k(x) < 0,\; \varphi_k(x) = 0 \},
    \end{equation*}
    where \( \bar{k} \geq 1 \) and \( \phi_k\colon \Rell \to \R\) and \(\varphi_k\colon \Rell \to \R\) are polynomial functions for all \(k \in \{1, \ldots, \bar{k}\}\). 
    Therefore, the semialgebraic functions and semialgebraic set-valued maps are functions and set-valued maps, respectively, whose graphs are semialgebraic sets.
\end{example}

From now on, we fix an o-minimal structure 
\( \Ocal \). And unless stated otherwise, all definable sets and definable mappings are understood to be definable in \( \Ocal \).
%%%%%%%%%%%%%%%%%%%%%%%%%%%%%%%%%%%%%%%%%%%%%%%%%
%%%%%%%%%%%%%%%%%%%%%%%%%%%%%%%%%%%%%%%%%%%%%%%%
%%%%%%%%%%%%%%%%%%%%%%%%%%%%%%%%%%%%%%%%%%%%%%%%%%%%%%%%%%%%%%%%
% %%%%%%%%%%%%%%%%%%%%%%%%%%%%%%%%%%%%%%%%%%%%%%%%%%%%%%%%%%%%%%%%
\section{The adjoint state method: general conservative case}\label{sect:generalresult}
%%%%%%%%%%%%%%%%%%%%%%%%%%%%%%%%%%%%%%%%%%%%%%%%%%%%%%%%%%%%%%%%
%%%%%%%%%%%%%%%%%%%%%%%%%%%%%%%%%%%%%%%%%%%%%%%%%%%%%%%%%%%%%%%%

In this section, we revisit the general problem \eqref{eq:closedcone}, which has the following form: for a closed convex cone \(K \subset \Rm\), a locally Lipschitz function \(F \colon \Rn \times \Rq \to \R\), and a continuously differentiable mapping \(C \colon \Rn \times \Rq \to \Rm\),
\begin{align}\label{prob:cpoCone}
    f(\param) \coloneq \inf_{\pvar \in \Rn} \left\{ F(\pvar,\param) \;:\; C(\pvar,\param) \in K \right\}.
\end{align}

Throughout the remainder of this section, all notation is adapted to Problem \eqref{prob:cpoCone}. In this setting, the MFCQ condition given in Equation \eqref{eq:22122025b} of Definition~\myref{def:mfcq} extends as follows. We denote by
\(
K^\circ \coloneq \left\{ x \in \Rm \;:\; \langle x,y\rangle \leq 0,\ \forall y \in K \right\}
\)
the polar cone of \(K\).

\begin{definition}[\emph{RCQ}]\label{def:mfcqCone}
   The Robinson's Constraint Qualification is satisfied at \((\pvar, \param) \in \Rn \times \Rq \) whenever
    \begin{align*}
     & \transp{\jac_{\pvar}\!C(\pvar, \param)}\dvar = 0, \,\dvar \in K^\circ , \, \left\langle C(\pvar, \param) , \dvar \right\rangle = 0 \implies \dvar = 0.
    \end{align*}
\end{definition}
The RCQ in Definition~\myref{def:mfcqCone} is one of the equivalent forms of Robinson's constraint qualification in finite dimension \cite[Corollary 2.98]{bonnans2000perturbation}.
Note that the condition on \(\dvar\) expresses the fact that $\dvar \in N_K(C(\pvar,\param))$, where $N_K$ denotes the normal cone to $K$ as follows.
\begin{lemma}\emph{(\cite[Corollary 23.5.4]{rockafellar1997convex})}
    Let $K \subset \Rm$ be a closed convex cone, then for any $z \in K$, we have $N_K(z) = \{\lambda \in K^\circ, \, \left\langle z, \lambda \right\rangle = 0\}$.
    \label{lem:normalConeCone}
\end{lemma}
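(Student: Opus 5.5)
We prove the two inclusions directly from the definition of the normal cone of convex analysis, $N_K(z)=\{\dvar\in\Rm:\ \langle \dvar, y-z\rangle\le 0\ \ \forall\, y\in K\}$ for $z\in K$. The only properties of $K$ we use are that it is a nonempty cone, so that $0\in K$ (take $t\to 0^+$ in $tz\in K$ and use closedness) and $tz\in K$ for every $t\geq 0$ whenever $z\in K$.

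\emph{Inclusion $N_K(z)\subset\{\dvar\in K^\circ,\ \langle z,\dvar\rangle=0\}$.} Let $\dvar\in N_K(z)$.
\begin{itemize}
\item Testing the normal cone inequality with $y=0\in K$ gives $\langle \dvar,-z\rangle\le 0$, that is, $\langle \dvar,z\rangle\ge 0$.
\item Testing it with $y=2z\in K$ gives $\langle \dvar,z\rangle\le 0$.
\end{itemize}
Hence $\langle z,\dvar\rangle=0$. Then, for an arbitrary $y\in K$,
\[
\langle \dvar,y\rangle=\langle \dvar,y-z\rangle+\langle\dvar,z\rangle=\langle \dvar,y-z\rangle\le 0,
\]
so $\dvar\in K^\circ$.

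\emph{Reverse inclusion.} Let $\dvar\in K^\circ$ with $\langle z,\dvar\rangle=0$. For any $y\in K$,
\[
\langle \dvar,y-z\rangle=\langle \dvar,y\rangle-\langle\dvar,z\rangle=\langle\dvar,y\rangle\le 0
\]
by definition of the polar cone. Thus $\dvar\in N_K(z)$.

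There is no genuine obstacle here. The only point requiring care is the choice of the two test points $y=0$ and $y=2z$, which pin down $\langle \dvar,z\rangle=0$ using only the cone structure of $K$.
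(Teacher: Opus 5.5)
Your proof is correct. With the convex-analysis definition $N_K(z)=\{\lambda:\langle\lambda,y-z\rangle\le 0\ \forall y\in K\}$, which for closed convex $K$ coincides with the variational-analysis normal cone used elsewhere in the paper, the test points $y=0$ and $y=2z$ force $\langle\lambda,z\rangle=0$. Both inclusions then follow at once.

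The paper does not prove the lemma; it imports it from \cite[Corollary 23.5.4]{rockafellar1997convex}, where it comes from conjugate duality rather than direct testing. There, $N_K=\partial\delta_K$, and the conjugate of $\delta_K$ is the support function of $K$, which equals $\delta_{K^\circ}$ because $K$ is a cone. The Fenchel--Young equality $\delta_K(z)+\delta_{K^\circ}(\lambda)=\langle z,\lambda\rangle$, which characterizes $\lambda\in\partial\delta_K(z)$, reads exactly $z\in K$, $\lambda\in K^\circ$, $\langle z,\lambda\rangle=0$.

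That route is heavier, but for closed $K$ it also gives the symmetric relation $\lambda\in N_K(z)\iff z\in N_{K^\circ}(\lambda)$, since $\partial\delta_{K^\circ}=(\partial\delta_K)^{-1}$. This is exactly the combination (the lemma applied to $K^\circ$ together with $K^{\circ\circ}=K$) used in the proof of Lemma~\ref{lem:complementarityCurves}.

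Your route is self-contained and shows that the identity itself uses only the cone property. You could even avoid $0\in K$, and hence closedness, by testing with $y=z/2$ instead of $y=0$. Closedness and convexity of $K$ matter only for the bipolar identity $K^{\circ\circ}=K$ when the lemma is applied to $K^\circ$ in Lemma~\ref{lem:complementarityCurves}, and for identifying $N_K$ with the limiting normal cone.
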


The condition $\left\langle z, \lambda \right\rangle = 0$ in Lemma~\myref{lem:normalConeCone} is called complementarity. It is a specificity of convex cones and constitutes a crucial proof element.
The \emph{Lagrangian} of Problem \eqref{prob:cpoCone}, \(\Lcal\colon  \Rn \times \Rm  \times \Rq \to \R \), is the same as for \eqref{prob:cpo} and writes
\begin{equation*}
    \Lcal(\pvar, \dvar, \param) \coloneq F(\pvar, \param) + \transp{\dvar} C(\pvar, \param).
\end{equation*}
We recover \eqref{prob:cpo} using the following lemma. 
\begin{lemma}\label{lem:eqmfcqrcq}
    Set $K = \Rm_- \times \{0\} \subset \Rm \times \Rp$ and let $C \colon \Rn \times \Rq \to \Rm \times \Rp$ be continuously differentiable and defined blockwise with first component $G$ and second component $H$, then the MFCQ condition from Definition~\myref{def:mfcq} holds if and only if the RCQ condition in Definition~\myref{def:mfcqCone} holds.
\end{lemma}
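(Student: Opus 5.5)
The plan is to compute both sides explicitly with $K = \Rm_- \times \{0\}$ and compare them. First I would record the polar cone $K^\circ = \Rm_+ \times \Rp$. Indeed, $\langle (\dvar,\dvarr),(y,0)\rangle = \transp{\dvar} y \le 0$ for all $y \le 0$ holds if and only if $\dvar \ge 0$, and $\dvarr$ is unconstrained. Writing $C = (G,H)$ and a multiplier as $(\dvar,\dvarr)\in\Rm\times\Rp$, I would then unpack the three conditions in Definition~\myref{def:mfcqCone}.
\begin{itemize}
\item The stationarity condition becomes $\transp{\jac_{\pvar}\!G}\dvar + \transp{\jac_{\pvar}\!H}\dvarr = 0$.
\item Membership $(\dvar,\dvarr)\in K^\circ$ means $\dvar \ge 0$.
\item Complementarity reads $\transp{G(\pvar,\param)}\dvar + \transp{H(\pvar,\param)}\dvarr = 0$, which at a feasible point (so $H=0$) reduces to $\transp{G}\dvar=0$.
\end{itemize}
Both conditions are understood at feasible points $C(\pvar,\param)\in K$, i.e.\ $G\le 0$ and $H=0$, which is how the MFCQ is stated.

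The key observation is that, since $\dvar\ge0$ and $G\le0$, the sum $\transp{G}\dvar=\sum_i \dvar_i g_i$ has nonpositive terms. It therefore vanishes if and only if $\dvar_i = 0$ for every inactive index $i\notin I(\pvar,\param)$. Thus the set of admissible multipliers in the RCQ is exactly the set of $(\dvar,\dvarr)$ with $\dvar_i\ge 0$ for $i\in I(\pvar,\param)$, $\dvar_i=0$ otherwise, and $\sum_{i\in I}\dvar_i\nabla_{\pvar} g_i + \transp{\jac_{\pvar}\!H}\dvarr = 0$.

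Both directions then follow directly. For MFCQ $\Rightarrow$ RCQ, take an RCQ-admissible $(\dvar,\dvarr)$. Its restriction to the active indices satisfies the premise of \eqref{eq:22122025b}, so the active components and $\dvarr$ vanish; the inactive components are already zero, hence $(\dvar,\dvarr)=0$. For RCQ $\Rightarrow$ MFCQ, take active coefficients $(\dvar_i)_{i\in I}$ and $\dvarr$ satisfying the MFCQ premise and extend $\dvar$ by zero on the inactive indices. The extended vector is RCQ-admissible, so it is zero, which gives the MFCQ conclusion.

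The only delicate point is this zero-extension/complementarity bookkeeping: one must check that complementarity together with the sign conditions forces the inactive multipliers to vanish. The rest is routine linear algebra.
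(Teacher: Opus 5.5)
Your proposal is correct and follows the paper's own route. The paper simply says the check is routine once one knows $K^\circ = \Rm_+ \times \Rp$; you carry that check out in full, including the complementarity argument that forces the inactive multipliers to vanish and the zero-extension between the full and active-index multipliers.
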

\begin{proof}
    It is routine to check that \eqref{eq:22122025b} is the same as RCQ using the fact that $K^\circ = \Rm_+ \times \Rp$.
\end{proof}

\subsection{Preliminary results}
Define the set-valued map \(R: \Rq \rightrightarrows \Rn\) of \emph{feasible point} of Problem \eqref{prob:cpoCone} as
\begin{equation*}
    R(\param) \coloneq \left\{\pvar \in \Rn: C(\pvar, \param) \in K\right\},
\end{equation*}
and the set-valued map \(\solp\colon \Rq \rightrightarrows \Rn\) of \emph{primal solutions} of \eqref{prob:cpoCone} as
\begin{align*}
    \solp(\param) \coloneq \argmin_{} \left\{F(\pvar, \param): \pvar \in R(\param)\right\}.
\end{align*}
Note that, by continuity of $C$ and closedness of $K$, the set-valued map \(R\) is graph closed. 
Our main construction is based on the following.
\begin{assumptions}[Standing assumptions]\label{ass:standing} \hfill

\begin{enumerate}[label=\rm{(\roman*)}, ref=\rm{\roman*}, align=left, labelwidth=*, leftmargin=*, labelindent=0.5em, labelsep=-0.4em]

    \item The objective function \(F\) is locally Lipschitz and definable with definable conservative field \(D_F\). The constraint function \(C\) is continuously differentiable and definable, and \(K\) is a closed convex definable cone.
    
    \item\label{ass:a2} The RCQ is satisfied at any \((\param, \pvar) \in \graph R\), i.e., the RCQ from Definition~\myref{def:mfcqCone} holds throughout the feasible set \(R(\param)\) for any \(\param \in \Rq\).
    
    \item\label{ass:a3} The set-valued map \(\solp\) has nonempty values and is locally bounded. 
\end{enumerate}
\end{assumptions}

\begin{remark}\hfill
\begin{enumerate}[label=\rm{(\roman*)}, ref=\rm{\roman*}, align=left, labelwidth=*, leftmargin=*, labelindent=0.5em, labelsep=-0.4em]
    
    \item When $K$ is the negative orthant, the authors in \cite{bolte2018qualification} proved that, for diagonally perturbed definable programming, the Assumption~\myhyperref{ass:standing}{ass:a2} is generic, i.e., the MFCQ holds throughout the feasible set for all but finitely many (hence for Lebesgue-almost all) diagonal perturbations; Problem \eqref{prob:cpo} with no equality constraints corresponds to the zero diagonal perturbation. This result hints at the prevalence of the MFCQ in the definable context.
    
    \item Assumption~\myhyperref{ass:standing}{ass:a3} ensures that, for any \(\param \in \Rq\), the infimum in the definition of the value function \(f\), in \eqref{prob:cpoCone}, is achieved. Then \(f\) can be equivalently defined as \(\param \mapsto F(\solp(\param),\param)\).
\end{enumerate}    
\end{remark}
\noindent We set \(\consv_{F(\cdot,\param)} \colon \Rn \rightrightarrows \Rn  \), the projection on the \(\pvar\) component of \(D_F\), which provides a conservative fields for the partial function \(F(\cdot,\param)\), for fixed \(\param \in \Rq\). The KKT system for Problem \eqref{prob:cpoCone} is given by
\begin{align}
    \label{eq:KKTcone}
    0 \in \conv\consv_{F(\cdot,\param)}(\pvar) +  \transp{\jac_{\pvar}\!C(\pvar, \param)} \dvar, \quad C(\pvar,\param) \in K, \, \dvar \in K^\circ,\,\left\langle C(\pvar, \param) , \dvar \right\rangle = 0,
\end{align}
where the conditions on $\dvar$ express the fact that $\dvar \in N_K(C(\pvar,\param))$ as in Lemma~\myref{lem:normalConeCone}.
Let \(\kkt\colon \Rn \times \Rq \rightrightarrows \Rm \) be the set-valued map providing Lagrange multipliers satisfying stationarity and complementarity from the KKT system \eqref{eq:KKTcone}:
\begin{align*}
    \kkt(\pvar, \param) \coloneq \big\{\dvar \in \Rm :  0 \in \conv\consv_{F(\cdot,\param)}(\pvar) +  \transp{\jac_{\pvar}\!C(\pvar, \param)} \dvar, \, \, \dvar \in K^\circ, \transp{\dvar} C(\pvar, \param) = 0 \big\}.
\end{align*}
We then define the set-valued map of \emph{primal-dual solutions} \(\solpd\colon \Rq \rightrightarrows \Rn \times \Rm \) as 
\begin{align*}
    \solpd(\param) \coloneq \{ (\pvar, \dvar) \in \Rn \times \Rm  : \pvar \in \solp(\param), \dvar \in \kkt(\pvar, \param)\}.
\end{align*}

\begin{proposition}\label{prop:locboundp}
    The set-valued map of primal-dual solutions \(\solpd\) has nonempty values and is locally bounded.
\end{proposition}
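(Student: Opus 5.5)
The proof splits into two parts: nonemptiness of the multiplier set $\kkt(\pvar,\param)$ for every $\pvar\in\solp(\param)$, and a uniform bound on multipliers near a fixed $\bar\param$. Local boundedness of the primal component is already given by Assumption~\myhyperref{ass:standing}{ass:a3}, so all the work is on the dual variable.

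\emph{Nonemptiness.} Fix $\param$ and $\pvar\in\solp(\param)$, which exists by Assumption~\myhyperref{ass:standing}{ass:a3}. Then $\pvar$ minimizes the locally Lipschitz function $F(\cdot,\param)$ over $\{\pvar: C(\pvar,\param)\in K\}$, and RCQ holds at $\pvar$ by Assumption~\myhyperref{ass:standing}{ass:a2}. I will invoke the nonsmooth Lagrange multiplier rule (for instance \cite[Theorem 10.1 and Exercise 10.52]{rockafellar1998variational}, or the Clarke version). Under RCQ the normal cone to the feasible set is $\transp{\jac_{\pvar}C}N_K(C(\pvar,\param))$, so there is $\dvar\in N_K(C(\pvar,\param))$ with $0\in\clarkep{\pvar}F(\pvar,\param)+\transp{\jac_{\pvar}C(\pvar,\param)}\dvar$.

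It remains to pass from the Clarke subdifferential to $\conv\consv_{F(\cdot,\param)}$. There are two facts to check here. First, $\consv_{F(\cdot,\param)}$, the projection of $D_F$, is a conservative field for $F(\cdot,\param)$. Second, by \cite[Corollary 1]{bolte2021conservative}, $\clarke\varphi\subset\conv D_\varphi$ for any conservative field $D_\varphi$ of $\varphi$. Together these give $0\in\conv\consv_{F(\cdot,\param)}(\pvar)+\transp{\jac_{\pvar}C}\dvar$. Lemma~\myref{lem:normalConeCone} then turns $\dvar\in N_K$ into $\dvar\in K^\circ$ and $\langle C,\dvar\rangle=0$, so $\dvar\in\kkt(\pvar,\param)$.

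\emph{Local boundedness.} I argue by contradiction. Suppose $\param_k\to\bar\param$, $(\pvar_k,\dvar_k)\in\solpd(\param_k)$ and $\norm{\dvar_k}\to\infty$. By local boundedness of $\solp$, after extraction $\pvar_k\to\bar\pvar$. Since $R$ is graph closed, $\bar\pvar$ is feasible for $\bar\param$; it need not be optimal, but feasibility is all that is needed.

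Choose $v_k\in\conv\consv_{F(\cdot,\param_k)}(\pvar_k)$ with $v_k+\transp{\jac_{\pvar}C(\pvar_k,\param_k)}\dvar_k=0$. The $v_k$ are bounded because $D_F$ is locally bounded. Dividing by $\norm{\dvar_k}$ and extracting, $\dvar_k/\norm{\dvar_k}\to\dvar$ with $\norm{\dvar}=1$. Passing to the limit, using continuity of $\jac_{\pvar}C$, gives $\transp{\jac_{\pvar}C(\bar\pvar,\bar\param)}\dvar=0$. Closedness of the cone $K^\circ$ gives $\dvar\in K^\circ$, and continuity of $C$ gives $\langle C(\bar\pvar,\bar\param),\dvar\rangle=0$, since complementarity is homogeneous in $\dvar$. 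This contradicts RCQ at $(\bar\pvar,\bar\param)$. Hence $\solpd$ is locally bounded.

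The main obstacle is the nonemptiness step. Bounding the multipliers is a routine compactness argument, whereas nonemptiness requires the correct nonsmooth multiplier rule with general conic constraints under RCQ. I will derive it from the normal cone chain rule for $\{\pvar: C(\pvar,\param)\in K\}$ (RCQ being exactly its qualification condition), combined with the Fermat rule $0\in\clarke(F+\indicator_R)\subset\clarke F+N^c_R$. The normal cone $N_R$ is convex under RCQ, so the limiting and Clarke normal cones coincide there.
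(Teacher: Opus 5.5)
Your proof is correct and follows the paper's argument. For nonemptiness you combine a nonsmooth multiplier rule under RCQ with \cite[Corollary 1]{bolte2021conservative} applied to the projected field $\consv_{F(\cdot,\param)}$, and for local boundedness you normalize the multipliers and contradict RCQ in the limit; the only differences are that you derive the multiplier rule from \cite{rockafellar1998variational} or Clarke rather than citing \cite[Theorem 3.3]{jourani1994constraint}, and that you only need the limit $\bar{\pvar}$ to be feasible (RCQ is assumed on all of $\graph R$), so you do not need the closed-graph Lemma~\ref{lem:sclosedgraph} that the paper uses.
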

\begin{proof} 
    For all \(\param \in \Rq\) and for every \(\pvar \in \solp(\param)\), the conservative KKT conditions~\eqref{eq:KKTcone} hold true by combining \cite[Theorem 3.3]{jourani1994constraint} and \cite[Corollary 1]{bolte2021conservative}, so that \(\kkt(\pvar, \param) \neq \varnothing\) and \(\solpd(\param)\) is nonempty.

    Let \(\param \in \Rq\). For the sake of contradiction, we assume that \(\solpd\) is not locally bounded around \(\param\). It follows that there exist \((\param_k)_{k \in \N} \subset \Rq\) and \((\pvar_k, \dvar_k) \in \solpd(\param_k)\), for all \(k \in \N\), such that \(\param_k \xrightarrow[k \to \infny]{} \param\) and \(\norm{(\pvar_k, \dvar_k)} \xrightarrow[k \to \infny]{} \infny\). Since \(\solp\) is locally bounded, we have that \((\pvar_k)_{k \in \N}\) is bounded and \(\norm{\dvar_k} \xrightarrow[k \to \infny]{} \infny\). 
    Passing to subsequences if necessary, we can assume that \(\norm{\dvar_k} \neq 0\) for all \(k \in \N\), and \(\pvar_k \xrightarrow[k \to \infny]{} \pvar\). By Lemma~\myref{lem:sclosedgraph}, we get \(\pvar \in \solp(\param)\). Let, for all \(k \in \N\), \(\alpha_k = \frac{\dvar_k}{\norm{\dvar_k}}\), so that \(\norm{\alpha_k} = 1\). Up to another subsequence, we can also assume \(\alpha_k \xrightarrow[k \to \infny]{} \alpha\) such that \(\norm{\alpha} = 1\). By definition of the set-valued map \(\kkt\), we know that, for every \(k \in \N\),
    \begin{equation}\label{eq:22122025a}
        \exists\, \vv_k \in \conv \consv_{F(\cdot,\param_k)}(\pvar_k), \quad\transp{\jac_{\pvar}\!C(\pvar_k, \param_k)}\alpha_k  = -\frac{\vv_k}{\norm{\dvar_k}}.
    \end{equation}
    Using the fact that \((\pvar, \param) \mapsto \consv_{F(\cdot,\param)}(\pvar)\) is locally bounded (because \(D_F\) is locally bounded), we have that \((\vv_k)_{k \in \N}\) is bounded. Up to another subsequence if needed, we have that \(\vv_k \xrightarrow[k \to \infny]{} \vv\). Taking the limit in {\eqref{eq:22122025a}}, we can write
    \begin{equation}\label{eq:21122025a}
        \transp{\jac_{\pvar}\!C(\pvar, \param)}\alpha = 0.
    \end{equation}
    Furthermore, for all \(k \in \N\), \(\alpha_{k} \in K^\circ\)  and \( \left\langle \alpha_{k}, C(\pvar_k, \param_k)\right\rangle = 0\).
    Passing to the limit, we get
    \begin{equation}\label{eq:21122025b}
        \alpha \in K^\circ\; \text{ and }\; \left\langle \alpha, C(\pvar, \param)\right\rangle = 0.
    \end{equation}
    Given Equations~\eqref{eq:21122025a} and \eqref{eq:21122025b}, the RCQ in Definition~\myref{def:mfcqCone} imposes \( \alpha = 0\). This contradicts the fact that \(\norm{\alpha} = 1\). Thus \(\solpd\) is locally bounded around \(\param\).
\end{proof}

\begin{remark} For any \(\param \in \Rq\), thanks to Proposition~\myref{prop:locboundp}, \(\solpd(\param) \neq \varnothing\) and we have
\begin{align}\label{eq:eqsvalfunct}
    f(\param) = F(\solp(\param), \param) 
    = \Lcal(\solpd(\param), \param).
\end{align}
The last equality in \eqref{eq:eqsvalfunct} is due to the fact that, for all \((\pvar, \dvar) \in \solpd(\param)\), one has by complementarity
\begin{equation*}
    \pvar \in \solp(\param),\; \transp{\dvar} C(\pvar, \param) = 0.
\end{equation*}
Thus, \(F(\solp(\param), \param) = F(\pvar, \param) = \Lcal(\pvar, \dvar, \param) = \Lcal(\solpd(\param), \param)\).
\end{remark}
To conclude, we will need the next Lemma which extends complementarity conditions for normal cones to time derivatives along smooth curves. 
\begin{lemma}
    \label{lem:complementarityCurves}
    Let $K \subset \Rm$ be a closed convex cone and
     $z \colon \R \to K$, $\lambda \colon \R \to K^\circ$ be both continuously differentiable such that $\lambda(t) \in N_K(z(t))$ for all $t$. Then $\langle \dot{\lambda}(t), z(t) \rangle = \langle \lambda(t), \dot z(t) \rangle = 0$ for all $t$.
\end{lemma}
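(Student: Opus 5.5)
We argue directly from the conic complementarity of Lemma~\myref{lem:normalConeCone}. For every $t$, since $z(t) \in K$ and $\lambda(t) \in N_K(z(t))$, we have $\lambda(t) \in K^\circ$ and $\langle \lambda(t), z(t)\rangle = 0$. Set $\phi(t) \coloneq \langle \lambda(t), z(t)\rangle$. Then $\phi \equiv 0$ and $\phi$ is $C^1$, so
\[
0 = \dot\phi(t) = \langle \dot\lambda(t), z(t)\rangle + \langle \lambda(t), \dot z(t)\rangle \quad \text{for all } t.
\]
It therefore suffices to show that one of the two terms vanishes, say $\langle \lambda(t), \dot z(t)\rangle = 0$. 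We will do this with one-sided difference quotients.

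Fix $t$ and $h \neq 0$. Since $z(t+h) \in K$ and $\lambda(t) \in K^\circ$, we get $\langle \lambda(t), z(t+h)\rangle \le 0$. Using $\langle \lambda(t), z(t)\rangle = 0$, this gives
\[
\langle \lambda(t), z(t+h) - z(t)\rangle \le 0 .
\]
We now divide by $h$ and let $h \to 0$ from each side. For $h \downarrow 0$ we obtain $\langle \lambda(t), \dot z(t)\rangle \le 0$. For $h \uparrow 0$, dividing by a negative number reverses the inequality, and we obtain $\langle \lambda(t), \dot z(t)\rangle \ge 0$. Hence $\langle \lambda(t), \dot z(t)\rangle = 0$, and the identity for $\dot\phi$ yields $\langle \dot\lambda(t), z(t)\rangle = 0$.

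As a consistency check, the symmetric argument also works. Since $\lambda(t+h) \in K^\circ$ and $z(t) \in K$, we have $\langle \lambda(t+h) - \lambda(t), z(t)\rangle \le 0$, and the same two-sided limit gives $\langle \dot\lambda(t), z(t)\rangle = 0$ directly.

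The proof has no real obstacle. The only delicate point is that the curves are defined on all of $\R$, so that differentiability at $t$ allows limits from both sides; this two-sided limit is what turns the inequality into an equality. For a curve on a closed interval, the argument would give only an inequality at the endpoints, with equality at interior points.
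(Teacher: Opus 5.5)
Your proof is correct, but it follows a different and more elementary route than the paper. The paper works with tangent cones. Since $\lambda(t+h)\in K^\circ$, one has $\dot\lambda(t)\in T_{K^\circ}(\lambda(t))=N_{K^\circ}(\lambda(t))^\circ$. Lemma~\ref{lem:normalConeCone}, applied to the cone $K^\circ$ together with $K^{\circ\circ}=K$, places $z(t)$ in $N_{K^\circ}(\lambda(t))$. This yields only the one-sided inequality $\langle z(t),\dot\lambda(t)\rangle\le 0$, and symmetrically $\langle \dot z(t),\lambda(t)\rangle\le 0$. Differentiating $\langle z(t),\lambda(t)\rangle\equiv 0$ then forces both nonpositive terms to vanish.

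You instead note that $s\mapsto\langle\lambda(t),z(s)\rangle$ is nonpositive and vanishes at $s=t$. Fermat's rule, via your two one-sided quotients, therefore kills $\langle\lambda(t),\dot z(t)\rangle$ directly, and the same holds for $s\mapsto\langle\lambda(s),z(t)\rangle$. This needs only the definition of $K^\circ$ and complementarity: no tangent cones, no polarity $T=N^\circ$, no bipolar theorem. With your symmetric argument, the product-rule identity is even redundant.

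What the paper's route buys is that it uses forward difference quotients only. This shows that your closing remark is inaccurate. At an endpoint $a$ of a closed interval, your forward quotients give $\langle\lambda(a),\dot z(a)\rangle\le 0$ and $\langle\dot\lambda(a),z(a)\rangle\le 0$. Together with the one-sided version of $\dot\phi(a)=0$, which you already derived, they force equality there as well. So two-sidedness is not the delicate point: either the two-sided limit or the product-rule identity suffices to turn the inequalities into equalities.
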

\begin{proof}
    We have $\dot{\lambda}(t) \in T_{K^\circ}(\lambda(t))$ for all $t$. We have for any $\lambda \in K^{\circ}$, using Lemma~\myref{lem:normalConeCone} and \cite[Corollary 6.30]{rockafellar1998variational}
    \begin{align*}
        T_{K^\circ}(\lambda) = N_{K^\circ}(\lambda)^\circ = \{ u \in K^{\circ\circ}, \, \left\langle u, \lambda \right\rangle = 0\}^\circ.
    \end{align*}
    Since $K^{\circ\circ} = K$, $z \in K$ and $\langle z, \lambda \rangle = 0$, we deduce that for all $t$, $\langle z(t), \dot\lambda(t) \rangle \leq 0$. 
    We have similarly for all $t$, $\langle \dot z(t), \lambda(t) \rangle \leq 0$ and, since for all $t$, $\langle z(t), \lambda(t) \rangle = 0$, we obtain
    \begin{align*}
        0 = \frac{d}{dt}  \langle z(t), \lambda(t) \rangle = \langle z(t), \dot\lambda(t) \rangle + \langle \dot z(t), \lambda(t) \rangle,
    \end{align*}
    from which the result follows.
\end{proof}

%%%%%%%%%%%%%%%%%%%%%%%%%%%%%%%%%%%%%%%%%%%%%%%%%%%%%%%%%%%%%%%%
%%%%%%%%%%%%%%%%%%%%%%%%%%%%%%%%%%%%%%%%%%%%%%%%%%%%%%%%%%%%%%%%
\subsection{Parametric subdifferentiation of the value function}
\label{sect:parametricOptimality}

As discussed in the introduction, \cite[Theorem 10.13]{rockafellar1998variational} provides a formula for the subdifferential of \(f\), which has a strong connection with the adjoint formula in \eqref{form:asm}. Let us make this explicit in the context of Problem \eqref{prob:cpoCone}. We will consider, only in this section, the extended formulation in order to motivate our construction. Let
\begin{align*}
    \bar{F}\colon \Rn \times \Rq &\quad\to\quad \R \cup \{\infny\}\\
    (\pvar,\param) &\quad\mapsto\quad F(\pvar, \param) + \indicator_{K}(C(\pvar, \param)),
\end{align*}
where, for \(S \in \Rell\) with \(\ell \in \N\), \(\indicator_S\) is the indicator function of \(S\), i.e., \(\indicator_S(\vy) = 0\) if \(\vy \in S\) and \(\indicator_S(\vy) = \infny\) otherwise. Problem \eqref{prob:cpoCone} is equivalent to unconstrained partial minimization of $\bar{F}$ so that this fits the context of \cite[Theorem 10.13]{rockafellar1998variational}. 

Under Assumption~\myref{ass:standing}, the RCQ holds jointly in \((\pvar,\param)\) as it is the case for the MFCQ in Remark~\myref{rem:jointMFCQ}. Indeed, the first rows of the matrix \(\transp{\jac C(\pvar, \param)}\) are exactly those of \(\transp{\jac_{\pvar}\!C(\pvar, \param)}\). Therefore, the chain rule \cite[Theorem 10.6]{rockafellar1998variational} applies to the indicator in the definition of \(\bar{F}\). Note that the qualification condition is given in abstract form, it corresponds to Definition~\myref{def:mfcqCone} using the fact that for any $z \in K$, $N_K(z) = \{\lambda \in K^\circ,\, \left\langle \lambda, z \right\rangle = 0\}$ \cite[Corollary 23.5.4]{rockafellar1997convex} and \cite[Corollary 8.11, Definition 3.3]{rockafellar1998variational} since $K$ is a closed convex cone.

Combining this with the sum rule \cite[Exercise 10.10]{rockafellar1998variational}, we have, for all \((\pvar, \param) \in \graph R\),
\begin{align*}
    \clarke \bar{F}(\pvar, \param) &\subset \clarke F(\pvar, \param) +   \transp{\jac C(\pvar, \param)}N_{K}(C(\pvar, \param)),
\end{align*}
where the normal cone is the subdifferential of the indicator in the definition of \(\bar{F}\). In our case, the elements of the normal cones are given by the Lagrange multipliers \(\dvar\) in \(\solpd(\param)\); see Lemma~\myref{lem:normalConeCone}. Putting everything together, the parametric optimality condition, \cite[Theorem 10.13]{rockafellar1998variational}, ensures that, under Assumptions~\myref{ass:standing}, for all \(\param \in \Rq\), \(\clarke f(\param)\) is a subset of the following,
\begin{align}\label{eq:clarkesubdiffvf}
    & D_f(\param) \coloneq \conv \Big\{\vu \in \Rq : \exists\, (\pvar, \dvar) \in \solpd(\param),  (0, \vu) \in \clarke F(\pvar,\param) + \jac \transp{C(\pvar, \param)}\dvar \Big\}. 
\end{align}
Importantly, for all \(\param \in \Rq\) and for all \(\pvar \in \solp(\param)\), thanks to the parametric Fermat's rule \cite[Example 10.12]{rockafellar1998variational}, there exists \(\vu \in \Rq\) such that  \((0, \vu) \in \clarke \bar{F}(\pvar, \param)\); hence \(D_f(\param)\) is nonempty.

However, as already stated earlier, \(\partial^c f \neq D_f\) in general and the generalized adjoint formula produces artifacts; see Example~\myref{ex:failclarke}. We show in the coming section that, under Assumptions~\myref{ass:standing}, the set-valued map \(D_f\) is a conservative field for \(f\), providing a conservative adjoint formula.
%%%%%%%%%%%%%%%%%%%%%%%%%%%%%%%%%%%%%%%%%%%%%%%%%%%%%%%%%%%%%%%%
%%%%%%%%%%%%%%%%%%%%%%%%%%%%%%%%%%%%%%%%%%%%%%%%%%%%%%%%%%%%%%%%
\subsection{Conservative adjoint formula}

This is our main result. We consider a slightly more general version of \(D_f\) in \eqref{eq:clarkesubdiffvf}, allowing for a general conservative field for \(F\), beyond \(\partial^c F\). The motivation for the proposed form of \(D_f\) follows directly from the preceding discussion in Section~\myref{sect:parametricOptimality}.
\begin{theorem}[Main result]\label{theo:main}
    Let Assumptions~\myref{ass:standing} hold. Recall that \(F\) is a general locally Lipschitz function, not necessary differentiable, \(D_F\) is a conservative field for \(F\) and the value function \(f\colon \Rq \to \R\) is the mapping
     \begin{align*}
       \param \mapsto F(\solp(\param),\param).
     \end{align*}  
     Define \(\consv_f: \Rq \rightrightarrows \Rq\) by
    \begin{align*}
         \param \mapsto & \big\{ \vu \in \Rq : \exists\, (\pvar, \dvar) \in \solpd(\param), (0, \vu) \in  \conv \consv_{F}(\pvar, \param) +  \jac \transp{C(\pvar, \param)}\dvar \big\}.
    \end{align*} 
     Then 
     $\consv_f$  is a conservative field for $f$,  and in particular,
     \(\clarke f(\param) \subset \conv \consv_f(\param).\)
\end{theorem}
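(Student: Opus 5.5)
Our plan is to verify the three defining properties of a conservative field for $\consv_f$: nonempty values, local boundedness with closed graph, and the chain rule along absolutely continuous curves. The inclusion $\clarke f(\param) \subset \conv \consv_f(\param)$ will then follow from \cite[Corollary 1]{bolte2021conservative}, once we also know that $f$ is locally Lipschitz and definable. Definability of $f$, $\solp$, $\solpd$ and $\consv_f$ comes from the stability of definable sets under first-order formulas, since $F$, $C$, $K$ and $D_F$ are definable.

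\textbf{Nonemptiness.} We use the parametric Fermat rule from Section~\myref{sect:parametricOptimality}, now with $D_F$ in place of $\clarke F$. Concretely, the conservative KKT system of Proposition~\myref{prop:locboundp} can be applied to the \emph{joint} problem in $(\pvar,\param)$, which is legitimate because the RCQ holds jointly. This yields some $(\pvar,\dvar)\in\solpd(\param)$ together with a $\vu$.

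\textbf{Local boundedness and closed graph.} Local boundedness follows from Proposition~\myref{prop:locboundp} and local boundedness of $D_F$. For the closed graph, we take $\param_k\to\param$, $\vu_k\to\vu$ with witnesses $(\pvar_k,\dvar_k)$, and extract convergent subsequences. We then need outer semicontinuity of $\solp$; this is Lemma~\myref{lem:sclosedgraph}, which relies on continuity of $f$ (cited in the proof of Proposition~\myref{prop:locboundp}). Graph-closedness of $D_F$ and of its convex hull, together with continuity of $\jac C$ and closedness of $K^\circ$ and of complementarity, passes to the limit. Local Lipschitz continuity of $f$ should follow from the Clarke-type bound together with local boundedness, or directly from the RCQ (metric regularity).

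\textbf{Chain rule (main obstacle).} It suffices, by \cite{bolte2021conservative}, to check the chain rule along absolutely continuous curves $\param(\cdot)$, and we may reduce to definable $C^1$ curves in the usual way for definable objects. The key step is to use definable selection and the monotonicity/cell decomposition theorem. We choose a definable selection $t\mapsto(\pvar(t),\dvar(t))\in\solpd(\param(t))$, which is $C^1$ on finitely many open intervals; on each such interval we may also choose a definable selection $t\mapsto w(t)\in\conv D_F$ realizing stationarity.

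On such an interval,
\begin{align*}
f(\param(t))=\Lcal(\pvar(t),\dvar(t),\param(t))=F(\pvar(t),\param(t))+\langle \dvar(t),C(\pvar(t),\param(t))\rangle .
\end{align*}
We differentiate each piece separately:
\begin{itemize}
\item By the conservativity of $D_F$ (the conservative chain rule holds for all elements of $D_F$, hence for its convex hull), for almost every $t$ the derivative of $F(\pvar(t),\param(t))$ is $\langle a,\dot\pvar\rangle+\langle b,\dot\param\rangle$ for any $(a,b)\in\conv D_F(\pvar(t),\param(t))$.
\item By Lemma~\myref{lem:complementarityCurves} applied to $z(t)=C(\pvar(t),\param(t))$ and $\lambda=\dvar(t)$, we get $\langle\dot\dvar,C\rangle=0$. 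Hence the derivative of the second term is $\langle \dvar,\jac_\pvar C\,\dot\pvar+\jac_\param C\,\dot\param\rangle$.
\end{itemize}
The Lemma requires $C^1$ maps defined on all of $\R$, but its proof is pointwise and applies on open intervals.

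Summing, we obtain $\frac{d}{dt}f=\langle a+\jac_\pvar C^{\!\top}\dvar,\dot\pvar\rangle+\langle b+\jac_\param C^{\!\top}\dvar,\dot\param\rangle$. The stationarity relation defining $\vu$ kills the first term and leaves $\langle\vu,\dot\param\rangle$.

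The delicate point is that the formula must hold for \emph{every} $\vu\in\consv_f(\param(t))$, not just for the selected one. We handle this by applying the argument to any definable selection. Since $\graph\consv_f$ is definable, the set of $t$ at which some element of $\consv_f(\param(t))$ violates the formula is definable. If it had positive measure, it would contain an interval, and a definable selection of violating triples $(\pvar,\dvar,\vu)$ on that interval would contradict the computation above. This argument works because the computation holds for any definable selection and not only for a fixed one. Hence the exceptional set is finite, which concludes the proof.
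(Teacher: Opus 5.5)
Your proof is correct, and its core computation is the paper's: a definable selection $t\mapsto(\pvar(t),\dvar(t))\in\solpd(\param(t))$, the identity $f(\param(t))=\Lcal(\pvar(t),\dvar(t),\param(t))$, conservativity of $D_F$ for the $F$-term, and Lemma~\myref{lem:complementarityCurves} to remove $\langle\dot{\dvar},C\rangle$. You differ genuinely in how you pass from one selection to every element of $\consv_f(\param(t))$.

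\textbf{How the paper does it.} It uses two Castaing-type density results. Lemma~\myref{lem:edlem8} takes it from one selection of $\consv_{\Lcal}$ to the whole set. Lemma~\myref{lem:edlem9} supplies countably many definable selections of $\solpd$ whose associated $u$'s are dense in $\consv_f$. These are combined with countable unions of null sets and closedness in $u$ of the linear identity.

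\textbf{How you do it.} You avoid the first lemma by noting that Definition~\myref{def:conservative} already quantifies over every $v\in D_F$ outside a single null set, and linearity extends this to $\conv D_F$. You replace the second lemma by a definable ``bad set'' argument. Since $f$, hence $t\mapsto\frac{d}{dt}f(\param(t))$, and $\graph\consv_f$ are definable, the set of times where some $u\in\consv_f(\param(t))$ violates the identity is definable. If it were infinite it would contain an interval. On that interval, definable choice produces a violating triple $(\pvar(t),\dvar(t),u(t))$, to which your single-selection computation applies, giving a contradiction. Selecting the witnesses $(\pvar,\dvar)$ jointly with $u$ is exactly what makes this work.

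\textbf{Comparison.} Your route is more elementary and self-contained, needing only definable choice and the structure of definable subsets of $\R$. It gives a finite exceptional set rather than a null one. It also lets you differentiate pointwise on the open intervals of smoothness, instead of integrating between breakpoints via one-sided limits. The paper's route is more modular, reusing general representation lemmas from \cite{pauwels2023conservative} in the pattern of its Theorem~12.

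\textbf{Two small corrections.}
\begin{itemize}
\item Lemma~\myref{lem:sclosedgraph} does not rely on continuity of $f$. It rests on inner semicontinuity of $R$ (Lemma~\myref{lem:rinnersemicontinuous}, from the Aubin property under RCQ). Continuity of $f$ (Lemma~\myref{lem:fcont}) is deduced from it, not the other way round.
\item A ``Clarke-type bound'' cannot yield local Lipschitz continuity of $f$, because $\clarke f$ presupposes it. A non-circular way: your chain-rule computation along definable $C^1$ curves never uses Lipschitz continuity of $f$. Apply it to segments and combine with continuity of $f$ and local boundedness of $\consv_f$. Then $f\circ\gamma$ is continuous and piecewise $C^1$ with bounded derivative, hence Lipschitz. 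This is essentially what the reduction cited from \cite{pauwels2023conservative} packages. Alternatively, make your metric-regularity route rigorous with a compactness argument giving uniform Aubin constants near $\solp(\bar{\param})$.
\end{itemize}
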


\begin{example}\label{ex:clarkecase} 
   The Clarke subdifferential is known to be conservative for definable functions \cite{davis2020stochastic}. Therefore, we can take the Clarke subdifferential of \(F\) as the conservative \(\consv_F\) for \(F\) in Theorem~\myref{theo:main} and everywhere \(\consv_F\) is used. 
   Optionally, for any \(\param \in \Rq\), we can replace \(\consv_{F(\cdot,\param)}\) by the Clarke partial subdifferential  \(\clarkep{\pvar}F(\cdot, \param)\), which is contained in the projection of the joint subdifferential \cite[Proposition 2.3.16]{clarke1983nonsmooth}. 
\end{example}

\begin{proof}[Proof of Theorem~\myref{theo:main}] 
    The proof follows similar lines as \cite[Theorem 12]{pauwels2023conservative}.
    Define the set-valued map \(\consv_{\Lcal}\colon \Rn \times \Rm \times \Rq \rightrightarrows \Rn \times \Rq\) as
    \begin{align*}
        \consv_{\Lcal}(\vy,\param) &= \consv_{\Lcal}(\pvar, \dvar, \param) \coloneq \consv_{F}(\pvar, \param) +  \left(\jac_{\pvar}\!\transp{C(\pvar, \param)}\dvar , \jac_{\param}\!\transp{C(\pvar, \param)}\dvar \right),
    \end{align*}
    where we use the shorthand notation \(\vy = (\pvar, \dvar)\). Note that \(\consv_{\Lcal}\) is a partial conservative field for \(\Lcal\), which ignores the variations in \(\dvar \in \Rm\). This will be possible thanks to complementarity and the structure of $\Lcal$.
    
    As \(\consv_\Lcal\), \(\solp\) and \(\solpd\) can be described by first-order formulas involving only definable objects, they are definable; see \cite[Theorem 1.13]{coste2000introduction}. 
    By definition, the definable conservative field \(\consv_{F}\) is graph closed, compact valued and locally bounded.
    By the parametric Fermat's rule in \cite[Example 10.12]{rockafellar1998variational}, \(\consv_\Lcal\) has nonempty values.
    It is also immediate that \(\consv_\Lcal\) is definable, graph closed, locally bounded, and compact valued by Proposition~\myref{prop:locboundp}.
    
    According to \cite[Theorem 2 and Lemma 6]{pauwels2023conservative}, to finish the proof, it remains to show that \(\consv_f\) has the chain rule property along continuously differentiable  and definable curves. So let \(\vt \mapsto \param(\vt)\) be a continuously differentiable  and definable curve in \(\Rq\). The following definable selections are possible using \cite[paragraph 4.5]{van1996geometric}. 

    We fix for now \(\vt \mapsto \vy(\vt) \coloneq (\pvar(\vt), \dvar(\vt))\) a definable selection in \(\vt \rightrightarrows \solpd(\param(\vt))\), with \(\vt \mapsto \pvar(\vt)\) a definable selection in \(\vt \rightrightarrows \solp(\param(\vt))\) and \(\vt \mapsto \dvar(\vt)\) a definable selection in \(\vt \rightrightarrows \kkt(\pvar(\vt), \param(t))\).
    Let \(\vt \mapsto T(\vt) \coloneq (\vv(\vt), \vu(\vt))\) be a definable selection in \(\vt \rightrightarrows \consv_{\Lcal}(\vy(\vt),\param(\vt))\).
    There is a definable selection \(\vt \mapsto (\vz(\vt), \vw(\vt))\) in \(\vt \rightrightarrows \consv_{F}(\pvar(\vt), \param(\vt))\) such that for all $\vt$ 
    \begin{align*}
        \vv(\vt) &= \vz(t) + \transp{\jac_{\pvar}\!C(\pvar(\vt), \param(\vt))}\dvar(\vt) \\
        \vu(\vt) &= \vw(t) + \transp{\jac_{\param}\!C(\pvar(\vt), \param(\vt))}\dvar(\vt).
    \end{align*}
    Since definable curves are piecewise continuously differentiable, all considered selections as well as 
    \[\vt \mapsto f(\param(\vt)) = F(\pvar(\vt), \param(\vt)) = \Lcal(\pvar(\vt), \dvar(\vt), \param(\vt)),\]
    are differentiable everywhere except at finitely many points, say at \(\vt_1, \vt_2, \ldots, \vt_M\). By Proposition~\myref{prop:locboundp}, \(\solpd\) is locally bounded. It follows from the monotonicity Lemma \cite[Section 2.1]{coste2000introduction}that \(\vt \mapsto \vy(\vt) = (\pvar(\vt), \dvar(\vt))\) has left and right limits everywhere and can be extended to an absolutely continuous curve on \([t_{\ell-1}, t_{\ell}]\), for any \(\ell \in \{2, \dots, M\}\). 
    
    Using the equalities in \eqref{eq:eqsvalfunct}, the continuity of \(f\) given by Lemma~\myref{lem:fcont}, and the continuity of \(F\) and \(C\), we have
    \begin{align}
        f(\param(\vt_{\ell})) - f(\param(\vt_{\ell-1})) &= \lim_{\vt \uparrow \vt_{\ell}} \Lcal(\pvar(\vt), \dvar(\vt), \param(\vt)) - \lim_{\vt \downarrow \vt_{\ell-1}} \Lcal(\pvar(\vt), \dvar(\vt), \param(\vt)) \nonumber \\
        &= F(\pvar(\vt_{\ell}^{-}), \param(\vt_{\ell})) -  F(\pvar(\vt_{\ell-1}^{+}), \param(\vt_{\ell-1})) \nonumber \\
        &\qquad + \transp{\dvar(\vt_{\ell}^{-})} C(\pvar(\vt_{\ell}^{-}), \param(\vt_{\ell}))  - \transp{\dvar(\vt_{\ell-1}^{+})} C(\pvar(\vt_{\ell-1}^{+}), \param(\vt_{\ell-1})) \nonumber \\
        &= \int_{\vt_{\ell-1}}^{\vt_{\ell}} \frac{d}{dt} F(\pvar(\vt), \param(\vt))\,dt + \int_{\vt_{\ell-1}}^{\vt_{\ell}} \frac{d}{dt}\left[\transp{\dvar(\vt)} C(\pvar(\vt), \param(\vt))\right]dt. \nonumber 
    \end{align}
    Using the fact that \(\consv_F\) is conservative for \(F\) in the first integral and differentiating in the others give
    \begin{align}
        &f(\param(\vt_{\ell})) - f(\param(\vt_{\ell-1})) = \int_{\vt_{\ell-1}}^{\vt_{\ell}} \big[ \scalarp{\dot{\param}(\vt), \vw(t)} + \scalarp{\dot{\pvar}(\vt), \vz(\vt)} \big] dt + \int_{\vt_{\ell-1}}^{\vt_{\ell}} \big[ \scalarp{\dot{\dvar}(\vt), C(\pvar(\vt), \param(\vt))} \big] dt \nonumber\\ 
        &+ \int_{\vt_{\ell-1}}^{\vt_{\ell}} \big[ \scalarp{\dvar(\vt), \jac_{\param}\!C(\pvar(\vt), \param(\vt))\dot{\param}(\vt)} + \scalarp{\dvar(\vt), \jac_{\pvar}\!C(\pvar(\vt), \param(\vt))\dot{\pvar}(\vt)} \big] dt.
        \label{eq:intchainrule}
    \end{align}    
    We note that for all $t, C(x(t),\param(t)) \in K$ and $\lambda(t) \in N_K(C(x(t), \param(t))$ thanks to Lemma~\myref{lem:normalConeCone}. Since both curves are continuously differentiable on \((t_{\ell-1}, t_{\ell})\), Lemma~\myref{lem:complementarityCurves} ensures that  
    \begin{equation}\label{eq:nullprod}
     t \mapsto \scalarp{\dot{\dvar}(\vt), C(\pvar(\vt), \param(\vt))} \equiv 0 \; \text{ on } (t_{\ell-1}, t_{\ell}).
    \end{equation}
    Using \eqref{eq:nullprod}, removing the finitely many discontinuous points and putting everything together, Equation~\eqref{eq:intchainrule} gives, for almost every \(\vt\),
    \begin{equation}
        \frac{d}{dt} f(\param(\vt)) = \scalarp{\dot{\pvar}(\vt), \vv(\vt)} + \scalarp{\dot{\param}(\vt), \vu(\vt)}.
        \label{eq:chainRuleSmallf}
    \end{equation}
    Since \(\consv_{\Lcal}\) is definable and compact valued, Lemma~\myref{lem:edlem8} says that there is a sequence of definable selections, as \(v\) and \(u\), that is dense in \(\consv_{\Lcal}( \vy(\vt), \param(\vt))\), for all
    \(t\). Each element of the sequence satisfies \eqref{eq:chainRuleSmallf} for almost all $t$. Combining the closure in Lemma~\myref{lem:edlem8}, with the fact that a countable union of Lebesgue negligible set is Lebesgue negligible, we obtain, for almost every \(\vt\),
    \begin{equation*}
        \frac{d}{dt} f(\param(\vt)) = \scalarp{\dot{\pvar}(\vt), \vv} + \scalarp{\dot{\param}(\vt), \vu} \quad \forall\, (\vv,\vu) \in \consv_{\Lcal}( \vy(\vt), \param(\vt)).
    \end{equation*}
    Then, using the fact that \((0,u) \in \consv_{\Lcal}( \vy(\vt), \param(\vt)) \) for all \(t\), we have, for almost every \(\vt\), 
    \begin{equation}\label{eq:uzeroarb}
        \frac{d}{dt} f(\param(\vt)) = \scalarp{\dot{\param}(\vt), \vu} \quad \forall\, \vu \in \Rq \text{ such that } (0, \vu) \in \consv_{\Lcal}( \vy(\vt), \param(\vt)).
    \end{equation}
    Note that the \(t \mapsto \vy(\vt)\) has been fixed until now. 
    Thanks to the density result in Lemma~\myref{lem:edlem9}, we know that there exists a sequence of definable selections 
    \(\Big(\underbrace{(\pvar_{\ell}, \dvar_{\ell})}_{=:\vy_{\ell}}\Big)_{\ell \in \N}\)
    such that, for all \(\ell \in \N\) and all \(\vt\), \(\vy_{\ell}(\vt) \in \solpd(\param(\vt))\) and such that, for all \(\vt\),
    \begin{align*}
        \consv_f(\param(\vt)) &= \big\{\vu \in \Rq : \exists\, \vy \in \solpd(\param(\vt)), (0, \vu)  \in \consv_{\Lcal}(\vy, \param(\vt))\big\} \\
        &=\cl\bigcup_{\ell\in\N}\big\{\vu \in \Rq : (0, \vu) \in \consv_{\Lcal}(\vy_{\ell}(\vt), \param(\vt))\big\}.
    \end{align*}
    For each $\ell \in \N$, $y_\ell$  satisfies \eqref{eq:uzeroarb} which applies to an arbitrary selection \(\vy(\vt) = (\pvar(\vt), \dvar(\vt)) \in \solpd(\param(\vt))\). Using the same density argument, it follows, that, for almost every \(\vt\),
    \begin{equation*}
        \frac{d}{dt} f(\param(\vt)) = \scalarp{\dot{\param}(\vt), \vu} \quad \forall\, \vu \in \consv_f(\param(\vt)).
    \end{equation*}
    This concludes the proof.
\end{proof}

%%%%%%%%%%%%%%%%%%%%%%%%%%%%%%%%%%%%%%%%%%%%%%%%%%%%%%%%%%%%%%%%
%%%%%%%%%%%%%%%%%%%%%%%%%%%%%%%%%%%%%%%%%%%%%%%%%%%%%%%%%%%%%%%%

\subsection{Gradient-like algorithms using the adjoint method}\label{sect:implicopt}
The main result of the previous section ensures that the adjoint method provides a selection in a conservative field for $f$. General conservative fields can be used as a first order oracle to optimize $f$ \cite{bolte2021conservative}. We describe below how the adjoint method yields an optimization method through a vanilla small-step gradient method   for the problem:
    $$\min_{\param \in \Rq} f(\param).$$

\begin{proposition}[Small step method]
    \label{cor:smallStepMethod}
    Under the assumptions of Theorem~\myref{theo:main}, consider the recursion
    \begin{align*}
        \param_{k+1} = \param_k - s\alpha_k \emph{ASM}(\param_k),
    \end{align*}
    where $\emph{ASM}$ is a selection of $\consv_f$, $s>0$, $\alpha_k > 0$ for all $k$ and $\alpha_k \to 0$ as $k \to \infty$.
    For almost all $\param_0 \in \R^q$ and all but finitely many $s > 0$, if the sequence $(\param_k)_{k \in \N}$ is bounded, then any accumulation point $\bar{\param}$ of $(\param_k)_{k\in\N}$ is Clarke critical, i.e., $0 \in \partial^c f(\bar{\param})$, where $\partial^c f(\bar{\param})$ is the Clarke generalized gradient of $f$ at $\bar{\param}$.
\end{proposition}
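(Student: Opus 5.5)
The plan is to reduce the statement to the known convergence result for small-step methods driven by selections of definable conservative fields, \cite[Theorem 6]{bolte2021conservative}. That result has three hypotheses. The function must be locally Lipschitz and definable. The field must be definable and conservative. And for almost every initialization and all but finitely many step sizes, the iterates must avoid the set where the field is not reduced to the gradient. It then concludes that the bounded iterates of the vanishing-step scheme have Clarke critical accumulation points. So the work consists of checking that $f$ and $\consv_f$ from Theorem~\myref{theo:main} fit that framework.

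First, I will establish that $f$ is locally Lipschitz and definable. Definability is immediate: $f(\param)=\inf\{F(\pvar,\param): C(\pvar,\param)\in K\}$ is given by a first-order formula over definable data \cite[Theorem 1.13]{coste2000introduction}. Local Lipschitz continuity follows because $\consv_f$ is conservative, hence locally bounded, and it is nonempty valued by Proposition~\myref{prop:locboundp}. The chain rule along segments then bounds the increments of $f$ by $\sup\|\consv_f\|$ times the length of the segment. Definability of $\consv_f$ was already noted inside the proof of Theorem~\myref{theo:main}. Conservativity is exactly the content of Theorem~\myref{theo:main}.

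Second, I will verify the avoidance property. Definable conservative fields coincide with $\{\nabla f\}$ outside a definable set $S$ of dimension $<q$ \cite[Theorem 1]{bolte2021conservative}. Hence $S$ is Lebesgue null, and outside $S$ any selection $\mathrm{ASM}$ equals $\nabla f$. Now fix $s$ and the sequence $(\alpha_k)$. I will show by induction that $\param_k\notin S$ for all $k$ when $\param_0$ lies outside a null set, except possibly for finitely many $s$. On $\Rq\setminus S$ the map $\param\mapsto\param - s\alpha_k\nabla f(\param)$ is definable. On each stratum of a $C^1$ stratification where $\nabla f$ is $C^1$, its Jacobian is $I-s\alpha_k\nabla^2 f$. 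This Jacobian is singular only when $1/(s\alpha_k)$ is an eigenvalue of $\nabla^2 f$. The set of exceptional values that are eigenvalues on a full-dimensional set is definable and of measure zero in $\R$, hence finite. Away from these values, preimages of null sets are null by the definable Sard/constant-rank argument. Countably many steps keep the exceptional $\param_0$ null.

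I expect this bookkeeping to be the main obstacle. The exceptional step set could a priori depend on $k$ through $s\alpha_k$, while the statement allows only finitely many bad $s$. I would first simply invoke the corresponding argument of \cite[Theorem 6]{bolte2021conservative}, which handles exactly this recursion with a general definable conservative field, rather than redo it.

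Third, once the iterates stay in the differentiability region, the recursion is $\param_{k+1}=\param_k - s\alpha_k v_k$ with $v_k\in\consv_f(\param_k)$. This is a vanishing-step discretization of the differential inclusion $\dot\param\in-\conv\consv_f(\param)$. By boundedness and the Benaïm--Hofbauer--Sorin stochastic-approximation framework, the limit sets are internally chain transitive for this inclusion. Conservativity makes $f$ a Lyapunov function, strictly decreasing off $\{0\in\conv\consv_f\}$. The definable Sard theorem for conservative fields makes $f(\{0\in\conv\consv_f\})$ finite, so accumulation points satisfy $0\in\conv\consv_f(\bar\param)$.

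The upgrade to Clarke criticality is the final step. Generic iterates use only gradients, so the accumulated directions lie in limits of gradients of $f$ taken at points outside $S$. I would rerun the Lyapunov argument with the smaller field $\partial^c f$. It is conservative for definable $f$ \cite{davis2020stochastic}. It also contains every limit of the $v_k=\nabla f(\param_k)$, since $\partial^c f$ is graph closed. This yields $0\in\partial^c f(\bar\param)$.
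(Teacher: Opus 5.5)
Your reduction follows the paper's route. The paper gives no argument beyond citing \cite[Theorem 9]{bolte2021conservative} and \cite[Theorem 7]{bolte2023subgradient} as in the proof of \cite[Theorem 4]{pauwels2023conservative}, which is your chain:
\begin{itemize}
\item Theorem~\ref{theo:main} gives a definable conservative $D_f$.
\item For generic $\theta_0$ and $s$, the iterates avoid the set where $D_f\neq\{\nabla f\}$.
\item The small-step/ODE argument is then run with $\partial^c f$.
\end{itemize}
Your intermediate pass with $\mathrm{conv}\,D_f$ is not needed. However, you silently use a hypothesis that the statement does not contain. The Bena\"im--Hofbauer--Sorin step requires $\sum_k\alpha_k=+\infty$, and only $\alpha_k\to0$ is assumed. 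Without it the conclusion fails. Take $F(x,\theta)=\theta+x^2$ with $C\equiv-1$ and $K=\mathbb{R}_-$. All assumptions hold, $f(\theta)=\theta$ and $D_f\equiv\{1\}$. With $\alpha_k=(k+1)^{-2}$, every sequence is bounded and converges to a non-critical point. You must add non-summability explicitly; the cited small-step theorems assume it.

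Second, the obstacle you flagged is real, and citing a theorem will not remove it. Let $T$ be the finite set of $t$ for which $\det(I-t\nabla^2 f)$ vanishes on an open set. Your argument proves avoidance for a.e.\ $\theta_0$ whenever $s\notin\bigcup_k T/\alpha_k$. With vanishing steps this exceptional set is countable, and in general it cannot be made finite independently of $\theta_0$. Here is a counterexample.
\begin{itemize}
\item Take $g$ semialgebraic and $C^1$ with $g'$ bounded, $g'(\theta)=\theta-c$ on an interval $V$, and $g'(c)=1/2$.
\item Realize $f=g$ with an artifact $0\in D_f(c)$ by adding $g(\theta)$ to the objective of Example~\ref{ex:failclarke} shifted to $c$. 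Add the constraints $x_1\ge-1$ and $x_2\ge-1-(\theta-c)^2$; solution sets stay bounded and MFCQ still holds everywhere.
\item Take $\alpha_k=1/(k+1)$ and $s=k+1$. Step $k$ has step size $1$ and maps $V$ onto $\{c\}$.
\item The earlier maps $\theta\mapsto\theta-s\alpha_j g'(\theta)$ are continuous and onto. Hence a nonempty open set of $\theta_0$ reaches $V$ at step $k$, or hits $c$ earlier.
\item The selection $\mathrm{ASM}(c)=0$ then freezes the bounded sequence at $c$, where $\partial^c f(c)=\{1/2\}$, a non-critical point.
\end{itemize}
So every positive integer $s$ is exceptional. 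What your argument actually delivers is the conclusion for all $s$ outside a countable set, not a finite one.
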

The proof is not provided since it is a direct consequence  of \cite[Theorem 9]{bolte2021conservative}, \cite[Theorem 7]{bolte2023subgradient}, see the proof of \cite[Theorem 4]{pauwels2023conservative}.

Proposition~\myref{cor:smallStepMethod} is related to the basic gradient method. The main device behind this result is continuous time Lyapunov decrease. Similar results hold for the heavy ball method \cite{le2024nonsmooth}, subgradient methods with Bregman distance \cite{ding2025stochastic} or adaptive methods such as Adam \cite{xiao2024adam} under appropriate assumptions.

%%%%%%%%%%%%%%%%%%%%%%%%%%%%%%%%%%%%%%%%%%%%%%%%%%%%%%%%%%%%%%%%
%%%%%%%%%%%%%%%%%%%%%%%%%%%%%%%%%%%%%%%%%%%%%%%%%%%%%%%%%%%%%%%%
\section{Failure of ASM without definability}\label{sect:failure}
Let us illustrate the importance of the definability assumptions in Problem \eqref{prob:cpoCone}. For this we demonstrate that the local Lipschitz continuity, path differentiability and even differentiability of the objective function \(F\), together with the continuous differentiability of the constraint functions \(G\) and \(H\), are not sufficient for \(D_f\) in Theorem~\myref{theo:main} to be a conservative field for \(f\). The restriction to a subclass of functions with additional properties is required. As already stated, in this work, we elected for definable functions that arise in many relevant applications.

Before presenting the result, we describe a fractal-type set that will be central to the proof. Indeed, we define the set \(S \subset \R^2\) as \(S \coloneq \cap_{\ell \in \N} S_{\ell}\), where for each \(\ell \in \N\), \(S_{\ell}\) is the union of \(4^{\ell}\) squares of side length \(1 / 4^{\ell}\) each and \((S_{\ell})_{\ell \in \N}\) form a nested decreasing sequence for the inclusion. In addition, the set \(S\) has the following properties:
\begin{enumerate}[label=\rm{(\roman*)}, ref=\rm{\roman*}, align=left, labelwidth=*, leftmargin=*, labelindent=0.5em, labelsep=-0.4em]\label{def:setcounterexample}

    \item \label{def:d1} \(S\) is nonempty and closed with empty interior, and is included in \([0,1] \times [0,1]\); hence it is compact; 
            
    \item \label{def:d3} \(S\) projected on the first axis gives \([0, 1] \times \{0\}\), and projected on the second axis it gives \(\{0\}\times [0, 1]\).
    
\end{enumerate}
Such a set \(S\) exists and can be constructed; see \cite[Section 4.1]{pauwels2023conservative} for all the details and all its properties. An illustration of the construction \cite[Figure 1]{pauwels2023conservative} is reproduced exactly in Figure~\myref{fig:fractal}.

\begin{figure}[ht]
				\centering
				\includegraphics[width=.8\textwidth]{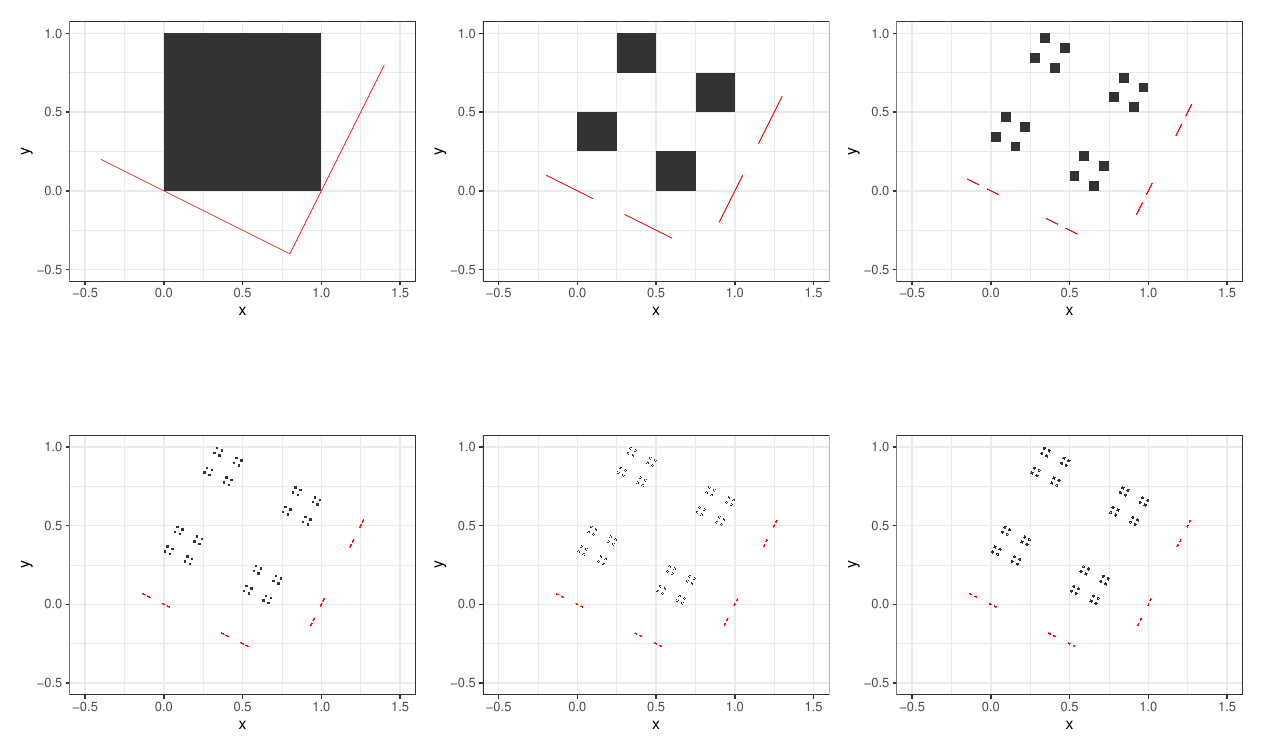}
				\caption{The fractal construction in \cite[Section 4.1]{pauwels2023conservative}. They start with the closed unit square in black. It is divided into 16 squares, each with a side length equal to one fourth of the original square’s side length. Only four of them at specific positions are kept and the others dropped. This process is repeated recursively on each square ad infinitum. The additional red lines represent projection of these sets on rotated axes. Considering \(S_{\ell}\), \(\ell \in \N\), the set obtained after \(\ell\) steps (\(S_{0}\) is the original square). We have that \(S_{\ell+1} \subset S_{\ell}\) for all \(\ell\). Then, let \(S = \cap_{\ell \in \N} S_{\ell}\), which is closed. The projection of \(S\) on each axes are full segments. Furthermore, in the limit, both projections on rotated axes are Cantor sets of zero measure.}
				\label{fig:fractal}
\end{figure}

\begin{claim}\label{theo:counterex} 
    There exists a locally Lipschitz objective function \(F \colon \R^2 \to \R\) that 
    \begin{enumerate}[label=\rm{(\roman*)}, ref=\rm{\roman*}, align=left, labelwidth=*, leftmargin=*, labelindent=0.5em, labelsep=-0.4em]
        \item is not definable in any o-minimal structure, 
        
        \item is differentiable and path differentiable on \(\R^2\), continuously differentiable on \(\R^2 \setminus S\), and 
        
        \item is  such that the set-valued map \(D_f\colon \R \rightrightarrows \R\) given by
        \begin{align*}
            \param &\mapsto \big\{\vu \in \R : \exists\, (\pvar, \dvar) \in \solpd(\param), (0, \vu) \in  \clarke F(\pvar, \param) +  \left(\dvar \nabla_{\!x} G(\pvar, \param), 0 \right)\big\}
        \end{align*}     
        is not a conservative field for the value function 
        \(f\colon \R \to \R\) defined by
        \begin{align*}
            \param \mapsto \min_{\pvar \in \R} \{F(\pvar, \param): G(\pvar, \param) \leq 0\},
        \end{align*}
        where G is the continuously differentiable function \((\pvar, \param) \mapsto \pvar^2 - 4\).
    \end{enumerate}
    Moreover, \(F\) and \(G\) are such that Assumptions~\myeqhyperrefrange{ass:standing}{ass:a2}{ass:a2}{ass:a3} hold.
\end{claim}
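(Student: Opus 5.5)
Building $F$ from $S$ is the natural route, following \cite[Section 4.1]{pauwels2023conservative}. That reference constructs a function $\varphi\colon \R^2\to\R$ with the following properties: it is locally Lipschitz, differentiable everywhere, $C^1$ off $S$, and path differentiable. Its Clarke subdifferential on $S$ nevertheless contains spurious elements. Concretely, $\nabla\varphi=0$ on $S$, while $\clarke\varphi(z)$ contains a nonzero vector, for instance $(0,1)$ or $(1,0)$, at every $z\in S$. This is possible because the gradient of $\varphi$ oscillates along the boundaries of the removed squares.

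I would take $F(\pvar,\param)=\varphi(\pvar,\param)+c\,\pvar^2$, or some similar coercive modification in $\pvar$, arranged so that the following hold:
\begin{enumerate}[label=(\alph*)]
\item the minimizers over $[-2,2]$ in $\pvar$ lie in $S$ (after translating $S$ into the feasible strip);
\item $\solp(\param)$ is locally bounded and nonempty, which is automatic since the feasible set $[-2,2]$ is compact;
\item the constraint $G=\pvar^2-4$ is inactive at minimizers, so that $\dvar=0$.
\end{enumerate}

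The RCQ/MFCQ check is trivial: when $\pvar^2=4$ the gradient $2\pvar\neq0$. Property (i) follows because definable functions are $C^1$ off a set with empty-interior, finite stratification. Here the set where $F$ fails to be $C^1$ is exactly the fractal $S$, and $S$ is not definable: if it were, its projections would be definable, and $S$ itself would have to be a finite union of cells. Projections on rotated axes of a finite union of cells cannot be Cantor sets of measure zero while the axis projections are full intervals. Alternatively, a definable path differentiable function has a Clarke field that is conservative and almost everywhere equal to the gradient along curves, which yields the same contradiction.

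For (iii), since $\dvar=0$ and $G$ does not depend on $\param$, we get $D_f(\param)=\{u:\exists \pvar\in\solp(\param),(0,u)\in\clarke F(\pvar,\param)\}$. I would design $\varphi$, possibly by adding a term depending on $\param$ only, so that $f$ is, say, identically constant on $[0,1]$. I would also ensure that for every $\param\in[0,1]$ some minimizer $(\pvar,\param)\in S$ exists; the full projection of $S$ onto the second axis guarantees this. At such points I need $\clarke F$ to contain some $(0,u)$ with $u\neq0$. To get this, I would use the rotated-axes structure: along the diagonal directions, gradient limits off $S$ take values like $\pm(1,1)$ and $\pm(1,-1)$, and their convex hull contains $(0,1)$. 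Then taking $\param(t)=t$ on $[0,1]$ gives $\frac{d}{dt}f(\param(t))=0$. However, $D_f(\param(t))\ni u\neq0$ for all $t$, so the chain rule \eqref{eq:chainrule} fails on a set of positive measure, and $D_f$ is not conservative.

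The main obstacle is the simultaneous design in (iii). The set of minimizers must project onto all of $[0,1]$ in $\param$ and lie inside $S$, while $\clarke F$ must contain a nonzero vector with vanishing $\pvar$-component there. Meanwhile $F$ must stay differentiable with $\nabla F=0$ on $S$, so that $f$ is flat. I would try to take $\varphi\ge0$, vanishing exactly on $S$ (a distance-like function built square by square, as in \cite{pauwels2023conservative}), so that $\min f=0$ is attained exactly on $S$. I would then extract $(0,u)$ from convex combinations of the limiting gradients on the rotated sides of the squares.
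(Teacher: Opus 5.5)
Your outline matches the paper's. You take a nonnegative $F$ vanishing exactly on $S\subset[0,1]^2$ and the compact feasible set $[-2,2]$, which makes MFCQ and local boundedness immediate. The full projection of $S$ gives $f\equiv 0$ on $[0,1]$, the minimizers are interior so $\dvar=0$, and the chain rule fails along $\param(t)=t$ once some $(0,u)$ with $u\neq 0$ lies in $\clarke F$ at those minimizers. Your non-definability argument via the non-$C^1$ locus is also acceptable; the paper just notes that $\argmin F=S$ would be definable.

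The genuine gap is that the actual content of the claim, the existence of such an $F$, is never established.
\begin{itemize}
\item You attribute it to \cite[Section 4.1]{pauwels2023conservative} without verification. Moreover, ``some nonzero vector'' in $\clarke F$ is not enough: you need one with vanishing $\pvar$-component.
\item The fallback mechanisms you sketch do not work. $\dist(\cdot,S)$ is not differentiable at points of $S$, because $S$ has holes of size comparable to $r$ within distance $r$ of each of its points. $\dist(\cdot,S)^2$ is differentiable on $S$, but its gradients near $S$ have norm $2\dist(\cdot,S)\to 0$, so its Clarke subdifferential on $S$ is $\{0\}$.
\item The underlying obstruction is this. Differentiability at $\bar z\in S$ forces $F(z)=o(\|z-\bar z\|)$. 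That is compatible with gradients of norm about $1$ accumulating at $\bar z$ only through oscillations whose height is negligible compared to their distance to $S$. Nothing in the squares or their ``rotated sides'' produces gradient limits like $\pm(1,1),\pm(1,-1)$ for you.
\item The term $c\,\pvar^2$ must be dropped. It is useless on the compact set $[-2,2]$, and since $F$ is flat on $S$ it pushes the minimizers off $S$ and destroys $f\equiv0$.
\item Item (ii) requires path differentiability, which you never argue. This is where the paper actually uses the geometry of $S$, through \cite[Lemma 6]{pauwels2023conservative}: an absolutely continuous curve lies in $S$ with nonzero velocity only on a null set of times.
\end{itemize}

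The missing idea is Claim~\ref{lem:fnctcounterex}, built in three steps.
\begin{itemize}
\item Start from a $C^\infty$ nonnegative Whitney function $\bar f$ whose zero set is exactly $S$, so $\nabla\bar f=0$ on $S$.
\item By Lemma~\ref{lem:subseqtocantor}, choose distinct points $z_k\notin S$ whose set of accumulation points is exactly $S$.
\item Add disjointly supported nonnegative bumps $r_k\,\varphi(\|z-z_k\|/r_k)$ with profile satisfying $\varphi'(1/2)=-1$. Take $r_k\to0$ and small compared to $\dist(z_k,S)$, so that the sum of bumps is $o(\|z-\bar z\|)$ at points $\bar z\in S$.
\end{itemize}
Then for every unit vector $v$, $\nabla F(z_k-\tfrac{r_k}{2}v)=\nabla\bar f(z_k-\tfrac{r_k}{2}v)+v\to v$ along $z_k\to\bar z$. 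Hence $\Bcal_c(0,1)\subset\clarke F(\bar z)$ for all $\bar z\in S$. This set contains $(0,u)$ for every $|u|\le 1$, which yields $D_f(\param)\supset[-1,1]$ on $[0,1]$.

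The resulting function is locally Lipschitz, differentiable, $C^1$ off $S$ and positive off $S$. It is path differentiable: split times into those with $\gamma(t)\notin S$ (classical chain rule), those with $\dot\gamma(t)=0$ (trivial by Lipschitz continuity), and the remaining null set.
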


\begin{proof}
    By Claim~\myref{lem:fnctcounterex}, there exists a differentiable, locally Lipschitz and path differentiable function \(F\colon 
    \R^2 \to \R\) such that \(F(\pvar, \param)= 0\) for every \((\pvar, \param) \in S\), and \(F(\pvar, \param) > 0\) for every \((\pvar, \param) \in \R^2 \setminus S\). In addition, \(F\) is continuously differentiable everywhere except on \(S\) with 
    \begin{align}\label{eq:inclusionclarkef}
    \Bcal_c(0,1) \subset \clarke F(\pvar, \param)\quad \forall\, (\pvar, \param) \in S.
    \end{align}
    Given the fact that \(\displaystyle \argmin_{(\pvar, \param) \in \R^2} F(\pvar, \param) = S\) is not definable, \(F\) is not definable either.

    The set-valued map of feasible point \(R\colon \R \to \R\), defined by
    \begin{align*}
        \param \mapsto \{\pvar \in \R : G(\pvar, \param) \leq 0\},
    \end{align*}
    has the value \(R(\param) = [-2,2]\) for any \(\param \in \R\). Given \(\param \in \R\), the constraint is only active at \((-2,\param)\) and \((2,\param)\). Additionally, \(\nabla_{\!x} G(-2, \param) \neq 0\) and \(\nabla_{\!x} G(2, \param) \neq 0\); thus the MFCQ holds at any \((\pvar, \param) \in \graph R\), i.e., Assumption~\myhyperref{ass:standing}{ass:a2} is satisfied.
    
    Since, \(F\) is continuous and for every \(\param \in \R\), \(R(\param) = [-2,2]\), the set-valued map of primal solutions \(\solp \colon \R \to \R\), defined by
    \begin{align*}
        \param \mapsto \argmin \{F(\pvar, \param) : \pvar \in R(\param)\},
    \end{align*}
    has nonempty values on \(\R\), and \(\solp(\param) \subset [-2,2]\) for any \(\param \in \R\), which means that \(\solp\) is uniformly bounded on \(\R\); hence Assumption~\myhyperref{ass:standing}{ass:a3} is verified. 

    Thanks to Definition~\myhyperref{def:setcounterexample}{def:d3}, for every \(\param \in [0,1]\), there exists \(\pvar \in [0,1]\) such that \((\pvar, \param) \in S\). It follows that the value function is zero on \([0,1]\), i.e,
    \begin{equation*}
    \forall\,\param \in [0,1],\quad f(\param) = 0.
    \end{equation*}

    Let \(\param \in [0,1]\) and let \(\bar{\pvar} \in \solp(\param)\). As \(\bar{\pvar} \in [0,1]\) and \(G\) is therefore not active at \((\bar{\pvar}, \param)\), we have \( (\bar{\pvar}, 0) \in \solpd(\param) \). It follows that
    \begin{align*}
        D_f(\param) &= \big\{\vu \in \R : \exists\, (\pvar, \dvar) \in \solpd(\param), (0, \vu) \in  \clarke F(\pvar, \param) +  \left(\lambda \nabla_{\!x} G(\pvar, \param), 0 \right)\big\} \\
        &\supset \big\{\vu \in \R : (0, \vu) \in  \clarke F(\bar{\pvar}, \param)\big\} \\
        &\supset \big\{\vu \in \R : (0, \vu) \in  \Bcal_c(0,1)\big\} \\
        &= [-1,1],
    \end{align*}
    where we use \eqref{eq:inclusionclarkef} to get the second inclusion.
    
    Consider the absolutely continuous curve \(\gamma \colon [0,1] \to \R \) defined as \(t \mapsto t\). Now we suppose, for the sake of contradiction, that \(f\) has the chain rule property for \(D_f\). Then, for almost all \(t \in [0,1]\), and for any \(\vu \in D_f(\gamma(t))\), in particular any \(\vu \in [-1,1]\),
    \begin{align*}
        \frac{d}{dt} f(\gamma(t)) = \scalarp{u, \dot{\gamma}(t)} = \scalarp{u, 1} = u.
    \end{align*}
    This is impossible. So \(f\) does not have the chain rule property for \(D_f\), i.e, \(D_f\) is not conservative for \(f\).
\end{proof}

%%%%%%%%%%%%%%%%%%%%%%%%%%%%%%%%%%%%%%%%%%%%%%%%%%%%%%%%%%%%%%%%%%%%%%%%%%%%%%%%%%%%
%%%%%%%%%%%%%%%%%%%%%%%%%%%%%%%%%%%% Appendix %%%%%%%%%%%%%%%%%%%%%%%%%%%%%
%%%%%%%%%%%%%%%%%%%%%%%%%%%%%%%%%%%%%%%%%%%%%%%%%%%%%%%%%%%%%%%%%%%%%%%%%%%%%%%%%%%
\appendix 

\section{Technical results}
The following lemma is an immediate consequence of the stronger \emph{Aubin property} result given in \cite[Corollary 4.4]{mordukhovich1994lipschitzian}. We give its quick derivation in the proof just for completeness.
\begin{lemma}\label{lem:rinnersemicontinuous}
    Under Assumption~\myref{ass:standing}, the set-valued map of feasible points \(R\) is inner semicontinuous, i.e., for all \((\pvar^{*}, \param^{*}) \in \Rn \times \Rq\) and \((\param_k)_{k \in \N} \subset \Rq\) such that \(\pvar^{*} \in R(\param^{*})\) and \(\param_k \xrightarrow[k \to \infny]{} \param^{*}\), there exists \((\pvar_k)_{k \in \N} \subset \Rn\) such that \(\pvar_k \xrightarrow[k \to \infny]{} \pvar^*\) and \(\pvar_k \in R(\param_k)\) for any \(k \in \N\).
\end{lemma}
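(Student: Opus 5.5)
The plan is to derive inner semicontinuity of \(R\) at \((\param^*,\pvar^*)\) from metric regularity of the constraint system \(C(\pvar,\param)\in K\) around \((\pvar^*,\param^*)\), which is what Robinson's qualification provides. By Assumption~\myhyperref{ass:standing}{ass:a2}, the RCQ of Definition~\myref{def:mfcqCone} holds at \((\pvar^*,\param^*)\in\graph R\). Using Lemma~\myref{lem:normalConeCone}, this says that the only \(\dvar\in N_K(C(\pvar^*,\param^*))\) with \(\transp{\jac_{\pvar}\!C(\pvar^*,\param^*)}\dvar=0\) is \(\dvar=0\). This is exactly the qualification needed for the Robinson--Ursescu stability theorem, in its finite-dimensional form \cite[Theorem 2.87 and Corollary 2.98]{bonnans2000perturbation}, or equivalently for the Mordukhovich criterion applied to the partial map \(\pvar\mapsto C(\pvar,\param)-K\).

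The first step is to invoke that theorem with \(\param\) treated as a parameter. It gives a neighborhood \(U\times V\) of \((\pvar^*,\param^*)\) and a constant \(\kappa>0\) such that
\begin{align*}
\dist\big(\pvar, R(\param)\big)\le \kappa\, \dist\big(C(\pvar,\param), K\big)\quad \text{for all } (\pvar,\param)\in U\times V.
\end{align*}
The uniformity in \(\param\) is legitimate because \(C\) is \(C^1\) jointly, so the linearization \(\jac_{\pvar}\!C\) depends continuously on \(\param\). This continuity is where the openness or regularity constant is shown to be stable under perturbation.

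The second step applies this estimate at \(\pvar=\pvar^*\) and \(\param=\param_k\) for \(k\) large. Since \(C(\pvar^*,\param^*)\in K\) and \(C\) is continuous, we get \(\dist(C(\pvar^*,\param_k),K)\le\norm{C(\pvar^*,\param_k)-C(\pvar^*,\param^*)}\to0\). Hence \(\dist(\pvar^*,R(\param_k))\to0\), and in particular \(R(\param_k)\neq\varnothing\). Because \(R(\param_k)\) is closed, we may pick \(\pvar_k\in R(\param_k)\) realizing this distance. For the finitely many initial indices we choose \(\pvar_k\) arbitrarily in \(R(\param_k)\); if \(R(\param_k)\) happens to be empty for some of them, we drop those indices, noting that the estimate already shows nonemptiness for all large \(k\). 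Then \(\pvar_k\to\pvar^*\).

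The main obstacle is justifying the uniform metric regularity estimate from the dual form of RCQ. If citing Robinson--Ursescu directly is deemed insufficient, I would argue by contradiction. Suppose no such \(\kappa\) exists. Apply Ekeland's variational principle to \(\pvar\mapsto\dist(C(\pvar,\param_k),K)\) near \(\pvar^*\) to produce points \(\pvar_k\to\pvar^*\) and unit vectors \(\dvar_k\in N_K(\Pi_K C(\pvar_k,\param_k))\) with \(\norm{\transp{\jac_{\pvar}\!C(\pvar_k,\param_k)}\dvar_k}\to0\). Passing to the limit, using closedness of \(\graph N_K\) and the characterization in Lemma~\myref{lem:normalConeCone}, yields a nonzero \(\dvar\) violating RCQ at \((\pvar^*,\param^*)\). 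This mirrors the compactness argument used in Proposition~\myref{prop:locboundp}.
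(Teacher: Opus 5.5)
Your proposal is correct and follows essentially the paper's route. Both arguments derive a Lipschitzian stability property of \(R\) near \((\pvar^*,\param^*)\) from the RCQ: the paper invokes the Aubin property via Mordukhovich's Corollary~4.4, while you invoke the equivalent Robinson metric-regularity estimate, with an Ekeland-based sketch as a self-contained fallback. Both then apply that property at \(\pvar=\pvar^*\) to obtain \(\pvar_k\in R(\param_k)\) converging to \(\pvar^*\).

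One loose end in your write-up is the case where \(R(\param_k)\) might be empty for early \(k\). This case never arises: Assumption~\myhyperref{ass:standing}{ass:a3} gives \(\varnothing\neq\solp(\param)\subset R(\param)\) for every \(\param\), so no indices need to be dropped.
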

\begin{proof}
    Let \((\pvar^{*}, \param^{*}) \in \Rn \times \Rq\) such that \(\pvar^{*} \in R(\param^{*})\), and \((\param_k)_{k \in \N} \subset \Rq\) such that \(\param_k \xrightarrow[k \to \infny]{} \param^{*}\). Since \((\pvar^{*}, \param^{*})\) is RCQ-qualified as in Definition~\myref{def:mfcqCone}, \cite[Corollary 4.4, (4.16)]{mordukhovich1994lipschitzian} ensures that \(R\) has the {Aubin property}, or equivalently, is \emph{pseudo-Lipschitzian}, around \((\pvar^{*}, \param^{*})\). That means there exist a neighborhood \(\Ncal\) of \(\pvar^{*}\), a neighborhood \(\Ucal\) of \(\param^{*}\) and \(M \geq 0\) such that
    \begin{align*}
        R(\param_1) \cap \Ncal \subset R(\param_2) + M \norm{\param_1-\param_2}\Bcal_{c}(0,1) \qquad \forall\, \param_1, \param_2 \in \Ucal.
    \end{align*}
    In particular, for any \(k \geq \bar{k}\), with \(\bar{k}\) large enough, 
    \begin{align*}
       \{\pvar^{*}\} \subset R(\param^{*}) \cap \Ncal \subset R(\param_k) + M \norm{\param^{*} - \param_k}\Bcal_{c}(0,1).
    \end{align*}
    Therefore, for any \(k \geq \bar{k}\) , there exists \(\pvar_k \in R(\param_k)\) and \(\vz_k \in \Bcal_{c}(0,1)\) such that 
    \[\pvar^{*} = \pvar_k + M \norm{\param^{*} - \param_k}\vz_k.\]
    It follows that
    \begin{align*}
        \norm{\pvar^{*} - \pvar_k} \leq M \norm{\param^{*} - \param_k}\text{ and } \lim_{k \to \infny} \norm{\pvar^{*} - \pvar_k} = 0.
    \end{align*}
    So \(\pvar_k \xrightarrow[k \to \infny]{} \pvar^{*}\), which proves that \(R\) is inner semicontinuous.
\end{proof}

\begin{lemma}\label{lem:sclosedgraph}
    Under Assumption~\myref{ass:standing}, the set-valued map of primal solutions \(\solp\) has a closed graph.
\end{lemma}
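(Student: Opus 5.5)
We take a sequence $(\param_k,\pvar_k)_{k\in\N}\subset\graph\solp$ converging to $(\param,\pvar)$, and show that $\pvar\in\solp(\param)$. This requires two facts: that $\pvar$ is feasible at $\param$, and that it is optimal there.

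\emph{Feasibility.} For every $k$ we have $C(\pvar_k,\param_k)\in K$. Since $C$ is continuous and $K$ is closed, passing to the limit gives $C(\pvar,\param)\in K$, that is, $\pvar\in R(\param)$. This is just the closedness of $\graph R$ already noted in the text.

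\emph{Optimality.} We argue by contradiction and suppose there is $\pvar'\in R(\param)$ with $F(\pvar',\param)<F(\pvar,\param)$. The key ingredient is the inner semicontinuity of $R$ from Lemma~\myref{lem:rinnersemicontinuous}, which holds under the RCQ of Assumption~\myhyperref{ass:standing}{ass:a2}. It provides points $\pvar'_k\in R(\param_k)$ with $\pvar'_k\to\pvar'$. Because $\pvar_k\in\solp(\param_k)$, we have $F(\pvar_k,\param_k)\le F(\pvar'_k,\param_k)$ for every $k$. Since $F$ is continuous (indeed locally Lipschitz), letting $k\to\infty$ yields $F(\pvar,\param)\le F(\pvar',\param)$, which contradicts the choice of $\pvar'$. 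Hence $\pvar$ minimizes $F(\cdot,\param)$ over $R(\param)$, and so $\pvar\in\solp(\param)$.

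The only delicate point is the approximation of competitors at the limit parameter by feasible points at nearby parameters. Closedness of $R$ alone would give only outer semicontinuity, which is not sufficient here. This is exactly why the qualification condition enters, through Lemma~\myref{lem:rinnersemicontinuous}. Note that local boundedness of $\solp$ is not needed for closedness of the graph; it is used elsewhere, for instance in Proposition~\myref{prop:locboundp}, to extract convergent subsequences.
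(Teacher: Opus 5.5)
Your proof is correct and follows essentially the same route as the paper. Feasibility of the limit comes from continuity of $C$ and closedness of $K$. Optimality comes from approximating a competitor in $R(\param)$ by feasible points at $\param_k$, using the inner semicontinuity of Lemma~\ref{lem:rinnersemicontinuous}, and then passing to the limit in $F(\pvar_k,\param_k)\le F(\pvar'_k,\param_k)$. The only cosmetic difference is that you argue by contradiction, whereas the paper runs the same limit argument directly for an arbitrary $\vy\in R(\param)$.
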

\begin{proof}
    Let \((\param_k)_{k \in \N} \subset \Rq\),  \((\pvar_k)_{k \in \N} \subset \Rn\) and \((\pvar^{*}, \param^{*}) \in \Rn \times \Rq\) such that \(\pvar_k \in \solp(\param_k)\) for all \(k \in \N\), \(\pvar_k \xrightarrow[k \to \infny]{} \pvar^{*}\) and \(\param_k \xrightarrow[k \to \infny]{} \param^{*}\). Using continuity of $C$ and closedness of $K$, we have that \(\pvar^{*} \in R(\param^{*})\).
    
    Take \(\vy \in R(\param^{*})\). Since \(R\) is inner semicontinuous thanks to Lemma~\myref{lem:rinnersemicontinuous}, there exists \((\vy_k)_{k \in \N} \subset \Rn\) such that \(\vy_k \xrightarrow[k \to \infny]{} \vy\) and \(\vy_k \in R(\param_k)\) for any \(k \in \N\). It follows that, for every \(k \in \N\),
    \begin{equation*}
        F(\pvar_k, \param_k) \leq F(\vy_k, \param_k),
    \end{equation*}
    which implies that
    \begin{equation*}
       \lim_{k \to \infny} F(\pvar_k, \param_k) \leq \lim_{k \to \infny} F(\vy_k, \param_k),
    \end{equation*}
    and, by continuity of \(F\),
    \begin{equation*}
        F(\pvar^{*}, \param^{*}) \leq F(\vy, \param^{*}).
    \end{equation*}
    We can conclude that \(\pvar^{*} \in \solp(\param^{*})\) and that \(\solp\) is graph closed.
\end{proof}
The following lemma can also be obtained through the Berge's maximum theorem. We provide a self contained proof which fits Assumption~\myref{ass:standing}
\begin{lemma}\label{lem:fcont}
    Under Assumption~\myref{ass:standing}, the value function \(f\), defined in \eqref{prob:cpoCone}, is continuous on \(\Rq\).
\end{lemma}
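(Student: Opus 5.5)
I will prove continuity of \(f\) at an arbitrary \(\param^*\in\Rq\) by showing separately upper and lower semicontinuity along an arbitrary sequence \(\param_k\to\param^*\). Throughout, I use the representation \(f(\param)=F(\pvar,\param)\) for any \(\pvar\in\solp(\param)\), which is valid because \(\solp\) has nonempty values by Assumption~\myhyperref{ass:standing}{ass:a3}.

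\emph{Upper semicontinuity.} I fix \(\pvar^*\in\solp(\param^*)\). Lemma~\myref{lem:rinnersemicontinuous} provides feasible points \(\vy_k\in R(\param_k)\) with \(\vy_k\to\pvar^*\). Since each \(\vy_k\) is feasible, \(f(\param_k)\le F(\vy_k,\param_k)\). Continuity of \(F\) then gives
\[
\limsup_{k} f(\param_k)\le F(\pvar^*,\param^*)=f(\param^*).
\]

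\emph{Lower semicontinuity.} I pick \(\pvar_k\in\solp(\param_k)\). Because \(\solp\) is locally bounded (Assumption~\myhyperref{ass:standing}{ass:a3}), the sequence \((\pvar_k)\) is bounded. I then choose a subsequence along which \(f(\param_{k_j})\to\liminf_k f(\param_k)\), and refine it further so that \(\pvar_{k_j}\to\bar\pvar\). By the closed graph property of \(\solp\) (Lemma~\myref{lem:sclosedgraph}), \(\bar\pvar\in\solp(\param^*)\). Hence, using continuity of \(F\),
\[
\liminf_k f(\param_k)=\lim_j F(\pvar_{k_j},\param_{k_j})=F(\bar\pvar,\param^*)=f(\param^*).
\]
Combining the two bounds gives \(\lim_k f(\param_k)=f(\param^*)\).

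The main point needing care is the lower bound. There, local boundedness of \(\solp\) is essential to extract a convergent subsequence of minimizers; without it the minimizers could escape to infinity and \(f\) could jump down. The closed-graph lemma then identifies the limit as a minimizer at \(\param^*\). Everything else is a routine application of the two appendix lemmas already established together with the continuity of \(F\).
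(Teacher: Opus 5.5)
Your proof is correct and is essentially the paper's argument. The paper also picks \(\pvar_k\in\solp(\param_k)\) and uses local boundedness of \(\solp\), the closed-graph Lemma~\myref{lem:sclosedgraph} and continuity of \(F\) to show that every accumulation point of \((f(\param_k))\) equals \(f(\param^*)\); your separate \(\limsup\) bound via inner semicontinuity of \(R\) is harmless but redundant, since running your lower-bound argument along a subsequence realizing the \(\limsup\) already gives \(\limsup_k f(\param_k)=f(\param^*)\) (and inner semicontinuity is used anyway inside the proof of Lemma~\myref{lem:sclosedgraph}).
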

\begin{proof}
    Let \((\param_k)_{k \in \N} \subset \Rq\) and \( \param^{*} \in \Rq\) such that \(\param_k \xrightarrow[k \to \infny]{} \param^{*}\). Take \(\pvar_k \in \solp(\param_k)\) for each \(k \in \N\). Since \(\solp\) is locally bounded, there exists a neighborhood \(\Ncal\) of \(\param^{*}\) and \(r > 0\) such that
     \begin{equation*}
        \bigcup_{\param \in \Ncal} \solp(\param) \subset \Bcal_{c}(0, r).
     \end{equation*} 
     For \(\bar{k}\) large enough, we have \(\param_k \in \Ncal\) and \(\pvar_k \in \solp(\param_k) \subset \Bcal_{c}(0, r)\) whenever \(k \geq \bar{k}\). We then get that \((\pvar_k)_{k \in \N}\) is bounded. Therefore \((f(\param_k))_{k \in \N} = (F(\pvar_k, \param_k))_{k \in \N}\) is bounded by continuity of \(F\). 
     
     Let \(f^* \in \R\) be a limit point of \((f(\param_k))_{k \in \N}\). Thus there exists \((f(\param_{k_{s}}))_{s \in \N}\) such that \(f(\param_{k_{s}}) \xrightarrow[s \to \infny]{} f^* \). The sequence \((\param_{k_{s}})_{s \in \N}\), being a subsequence of \((\param_k)_{k \in \N}\), converges to \(\param^{*}\). The corresponding sequence \((\pvar_{k_{s}})_{s \in \N}\) is bounded by the previous paragraph. So, we can extract a convergent subsequence \((\pvar_{k_{s_{\ell}}})_{\ell \in \N}\) and let \(\pvar^{*}\) be its limit. Since \(\solp\) has a closed graph by Lemma~\myref{lem:sclosedgraph}, \(\pvar^{*} \in \solp(\param^{*})\). It follows that
     \begin{equation*}
         f^* = \lim_{s \to \infny} f(\param_{k_{s}}) =  \lim_{\ell \to \infny} f(\param_{k_{s_{\ell}}}) = \lim_{\ell \to \infny} F( \pvar_{k_{s_{\ell}}}, \param_{k_{s_{\ell}}}) = F(\pvar^{*}, \param^{*}) =  f(\param^{*}).
     \end{equation*} 
     As \(f^*\) was arbitrary, we get that \(f(\param^{*})\) is the only accumulation point of the bounded sequence \((f(\param_k))_{k \in \N}\). Consequently, \(f(\param_{k}) \xrightarrow[k \to \infny]{} f(\param^{*})\) and \(f\) is continuous.
\end{proof}

The following two lemmas are definable versions of Castaing representation theorem.
\begin{lemma}[{\cite[Lemma 8]{pauwels2023conservative}}]\label{lem:edlem8} Let \(J\colon \Rs \rightrightarrows \Rr\) be a compact-valued definable set-valued map with nonempty values. Then there exists a sequence of definable selections \((\vV_{\ell})_{{\ell} \in \N}\) for \(J\) such that, for any \(\pvar \in \Rs\),
\begin{equation*}
    J(\pvar) = \cl\{\vV_{\ell}(\pvar):\ell \in \N\}.
\end{equation*}
\end{lemma}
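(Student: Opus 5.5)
This is a definable Castaing-type representation, and I would build it from the definable choice theorem together with a countable dense family of "targets". Fix a countable dense subset \(\{q_k\}_{k\in\N}\) of \(\Rr\); for instance, points with rational coordinates (rational constants are definable). For each \(k\) and each \(\pvar\in\Rs\), consider the nearest-point set
\[
P_k(\pvar) \coloneq \argmin_{\vv\in J(\pvar)} \norm{\vv-q_k}.
\]
It is nonempty because \(J(\pvar)\) is nonempty and compact. The map \(P_k\) is definable, since its graph is described by a first-order formula built from \(\graph J\), the polynomial function \(\norm{\cdot}^2\), and quantifiers; see \cite[Theorem 1.13]{coste2000introduction}. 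The definable choice theorem \cite[paragraph 4.5]{van1996geometric} then provides a definable selection \(\vV_k\) of \(P_k\), and hence of \(J\).

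Density is immediate. Fix \(\pvar\) and \(\vv\in J(\pvar)\). For any \(\epsilon>0\), choose \(k\) with \(\norm{q_k-\vv}<\epsilon\). Since \(\vV_k(\pvar)\) minimizes the distance to \(q_k\) over \(J(\pvar)\), we get \(\norm{\vV_k(\pvar)-q_k}\le\norm{\vv-q_k}<\epsilon\). Therefore \(\norm{\vV_k(\pvar)-\vv}<2\epsilon\), so \(\vv\in\cl\{\vV_\ell(\pvar):\ell\in\N\}\). Conversely, every \(\vV_\ell(\pvar)\) lies in \(J(\pvar)\), which is closed, so the closure is contained in \(J(\pvar)\). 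This proves equality.

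The only step needing care is the definability of each \(P_k\), since the quantification over \(J(\pvar)\) must be expressed inside the structure. Explicitly,
\[
\graph P_k=\{(\pvar,\vv): (\pvar,\vv)\in\graph J,\ \forall \vw\,[(\pvar,\vw)\in\graph J \Rightarrow \norm{\vv-q_k}^2\le\norm{\vw-q_k}^2]\}.
\]
This set is obtained from definable sets by complements and projections, so it is definable. Compactness of the values is used only to ensure that the minimum is attained, and hence that each \(P_k\) has nonempty values.
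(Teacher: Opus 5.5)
Your proof is correct. The paper does not prove this lemma itself; it imports the statement from \cite[Lemma 8]{pauwels2023conservative}, so there is no in-paper argument to compare against. Your construction gives a complete, self-contained proof along the standard lines of a definable Castaing representation: for each rational target \(q_k\), you take a definable selection of the nearest-point map \(\pvar \rightrightarrows \argmin_{\vv\in J(\pvar)}\norm{\vv-q_k}\), obtained by definable choice.

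Each ingredient is used correctly:
\begin{itemize}
\item The argmin map is definable via the first-order formula you wrote, with rational constants as definable parameters.
\item Definable choice applies because the structure expands \((\R,+,\cdot)\).
\item Nonemptiness together with compactness of the values gives attainment at every \(\pvar\in\Rs\), so each selection is defined everywhere.
\item The \(2\epsilon\) triangle-inequality estimate gives density.
\item Closedness of \(J(\pvar)\) gives the reverse inclusion.
\end{itemize}
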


\begin{lemma}[{\cite[Lemma 9]{pauwels2023conservative}}]\label{lem:edlem9} 
    Let Assumptions~\myref{ass:standing} hold. Let \(\consv_{F}\) be a definable conservative field for \(F\). Then there exists a sequence of definable selections \(\left\{(\pvar_{\ell}, \dvar_{\ell})\right\}_{{\ell} \in \N}\) such that, for any \({\ell}\in\N\) and \(\param \in \Rq\), \((\pvar_{\ell}(\param), \dvar_{\ell}(\param)) \in \solpd(\param)\) and such that, for all \(\param \in \Rq\),
    \begin{align*}
        &\Big\{\vu \in \Rq : \exists\, (\pvar, \dvar) \in \solpd(\param), (0, \vu) \in  \conv\consv_{F}(\pvar, \param)  + \left(\jac_{\pvar}\!\transp{C(\pvar, \param)}\dvar,\jac_{\param}\!\transp{C(\pvar, \param)}\dvar \right) \Big\} \\
        &= \cl \big\{\vu \in \Rq :\exists\, {\ell}\in\N, (0, \vu) \in  \conv\consv_{F}(\pvar_{\ell}(\param), \param) \\
        &\qquad \quad + \big(\jac_{\pvar}\!\transp{C(\pvar_{\ell}(\param), \param)}\dvar_{\ell}(\param) ,\jac_{\param}\!\transp{C(\pvar_{\ell}(\param), \param)}\dvar_{\ell}(\param) \big) \big\}.
    \end{align*}
\end{lemma}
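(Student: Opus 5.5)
This is the definable Castaing-type selection result for the composite map \(\param \rightrightarrows \{(\pvar,\dvar,\vu)\}\). I would reduce it to Lemma~\myref{lem:edlem8} applied to an auxiliary definable set-valued map.

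\textbf{Step 1: the auxiliary map.} Define \(J\colon \Rq \rightrightarrows \Rn\times\Rm\times\Rq\) by
\[
J(\param) \coloneq \big\{(\pvar,\dvar,\vu) : (\pvar,\dvar)\in\solpd(\param),\ (0,\vu)\in \conv\consv_F(\pvar,\param) + \big(\jac_{\pvar}\!\transp{C(\pvar,\param)}\dvar,\jac_{\param}\!\transp{C(\pvar,\param)}\dvar\big)\big\}.
\]
Its graph is given by a first-order formula in definable objects (\(F\), \(C\), \(K\), \(\consv_F\), convex hulls via Carath\'eodory with finitely many coefficients). Hence \(J\) is definable by \cite[Theorem 1.13]{coste2000introduction}.

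\textbf{Step 2: nonempty compact values.} Nonemptiness follows from the parametric Fermat rule, as in the discussion after \eqref{eq:clarkesubdiffvf}: for \(\pvar\in\solp(\param)\) there are a multiplier and a \(\vu\) with \((0,\vu)\) in the set. With a general \(\consv_F\) one can instead take \(\consv_F \supset \clarke F\) or argue through Proposition~\myref{prop:locboundp}. In any case, nonemptiness is implicit in the statement through \(\consv_f\).

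Boundedness comes from three facts. First, \(\solpd(\param)\) is bounded by Proposition~\myref{prop:locboundp}. Second, \(\consv_F\) is locally bounded. Third, \(\jac C\) is continuous, so \(\vu\) is bounded.

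Closedness comes from three closed-graph properties. The map \(\solp\) has closed graph by Lemma~\myref{lem:sclosedgraph}. The KKT conditions are closed conditions, since the convex hull of a compact-valued, graph-closed, locally bounded map is again graph closed, and \(K^\circ\) is closed. Finally, \(\jac C\) is continuous. So \(J(\param)\) is compact.

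\textbf{Step 3: apply Lemma~\myref{lem:edlem8}.} The lemma gives definable selections \((\pvar_\ell,\dvar_\ell,\vu_\ell)\) with \(J(\param)=\cl\{(\pvar_\ell,\dvar_\ell,\vu_\ell)(\param)\}\). We keep \((\pvar_\ell,\dvar_\ell)\in\solpd(\param)\).

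\textbf{Step 4: the two inclusions.} The right-hand set is contained in the left-hand set for two reasons. Each \(\ell\)-set is contained in the left set. The left set is closed, being the projection onto the \(\vu\)-coordinate of the compact set \(J(\param)\).

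For the reverse inclusion, take \(\vu\) in the left set, so \((\pvar,\dvar,\vu)\in J(\param)\). Approximate this triple by \((\pvar_\ell,\dvar_\ell,\vu_\ell)(\param)\). Each \(\vu_\ell(\param)\) belongs to the \(\ell\)-th set, so \(\vu\) lies in the closure.

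The main obstacle is the compactness and closedness of \(J(\param)\). This rests on closedness of \(\solp\) and on stability of the conic complementarity conditions under limits, both already available.
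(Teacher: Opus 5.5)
Your proposal is correct, and it follows the natural route behind the cited \cite[Lemma 9]{pauwels2023conservative}: apply the definable Castaing representation of Lemma~\myref{lem:edlem8} to the auxiliary compact-valued map $J$, project onto the $\vu$-coordinate, and use compactness of $J(\param)$ to show that the left-hand set is closed. The one loose point is nonemptiness, which follows directly without the Fermat rule or an appeal to $\consv_f$: $\consv_{F(\cdot,\param)}$ is the $\pvar$-projection of $\consv_F$, projection commutes with convex hulls, and $\solpd(\param)\neq\varnothing$ by Proposition~\myref{prop:locboundp}, so for any $(\pvar,\dvar)\in\solpd(\param)$ the stationarity condition lifts to some $(0,\vu)$ in the required set.
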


The following results are technical elements for the construction of the counter example of Section~\myref{sect:failure}.
\begin{lemma}\label{lem:subseqtocantor}
    Let \(A \subset \R^{M}\) be closed with empty interior. Then \(A\) is the set of accumulation points of a sequence \((\vz_k)_{k\in\N} \subset \R^{M} \setminus A\) with \(\vz_s \neq \vz_r\) for all \(s \neq r\).
\end{lemma}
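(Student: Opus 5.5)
We need to prove Lemma~\myref{lem:subseqtocantor}: if \(A \subset \R^{M}\) is closed with empty interior, then \(A\) is exactly the set of accumulation points of some injective sequence \((\vz_k)_{k\in\N}\) in \(\R^M\setminus A\). (If \(A=\varnothing\), any injective sequence escaping to infinity, e.g. \(\vz_k = k e_1\), works, so assume \(A\neq\varnothing\).) The plan is a dyadic covering argument: at scale \(2^{-j}\), pick finitely many points of the complement close to \(A\) and concatenate them over \(j\).

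Fix \(j\in\N\) and set \(A_j \coloneq A\cap \Bcal_c(0,j)\), which is compact. Cover \(A_j\) by finitely many balls \(\Bcal(a^j_1,2^{-j}),\dots,\Bcal(a^j_{N_j},2^{-j})\) with centers \(a^j_i\in A_j\). Since \(A\) has empty interior, each ball \(\Bcal(a^j_i,2^{-j})\) meets \(\R^M\setminus A\), and that intersection is a nonempty open set, hence infinite. We may therefore choose \(\vz^j_i\in \Bcal(a^j_i,2^{-j})\setminus A\) distinct from all finitely many points chosen at earlier stages, which gives injectivity. List the points block by block, \(j=1,2,\dots\), and within each block \(i=1,\dots,N_j\), to obtain the sequence \((\vz_k)\).

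Next, every point of \(A\) is an accumulation point. Let \(a\in A\) and \(j_0\geq\norm{a}\). For each \(j\geq j_0\) we have \(a\in A_j\), so some \(a^j_i\) satisfies \(\norm{a-a^j_i}<2^{-j}\), and then \(\norm{a-\vz^j_i}<2^{1-j}\). These indices come from distinct blocks, so they are infinitely many distinct indices, and \(a\) is a limit of a subsequence.

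Conversely, suppose \(\vz\) is an accumulation point, say \(\vz_{k_s}\to\vz\) with \(k_s\to\infty\). Each block is finite, so the corresponding block indices \(j_s\to\infty\), and \(\dist(\vz_{k_s},A)<2^{-j_s}\to0\). Since \(A\) is closed, \(\vz\in A\). The only step needing care is this one, where we must ensure that the finiteness of each block forces \(j_s\to\infty\); the rest of the argument is routine.
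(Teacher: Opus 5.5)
Your proof is correct and follows essentially the same strategy as the paper's. Both build the sequence from finite blocks of distinct points of \(\R^M\setminus A\) lying within a vanishing radius of points of \(A\), enumerated block by block, and both check that every point of \(A\) is approached in infinitely many blocks while \(\dist(\vz_k,A)\to 0\) along blocks forces every accumulation point into \(A\). The differences are minor: the paper approximates the first \(\ell\) terms of a countable dense sequence \((c_r)\) of \(A\) at radius \(1/\ell\) rather than finite \(2^{-j}\)-nets of the compact pieces \(A\cap\Bcal_c(0,j)\), and your separate treatment of \(A=\varnothing\) covers a case the paper's proof tacitly excludes.
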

\begin{proof} 
    Since every subset of \(\R^{M}\) is itself separable, there exists a sequence \((c_r)_{r\in\N} \subset A\) which is dense in \(A\). We know that \(\R^{M} \setminus A\) is dense in \(\R^{M}\). Therefore, for every \(\ell \in \N\) and \(s \in \{1,\ldots,\ell\}\), we can take 
    \[
    \vw_{\ell,s} \in (\R^{M}\setminus A) \cap \Bcal(c_s, 1/\ell)
    \]
    such that
    \[ 
    \vw_{\ell,s} \notin \{\vw_{r,t}: r,t \in \N, r < \ell, \text{ and } 1\leq t \leq r\} \cup \{\vw_{r,t}: r,t \in \N, r = \ell, \text{ and } 1\leq t < s\}.
    \]
    We set \((\vz_k)_{k \in \N}\) as an ordered enumeration of 
    \(
    \{\vw_{1,1}, \vw_{2,1}, \vw_{2,2}, \vw_{3,1}, \vw_{3,2}, \vw_{3,3}, \vw_{4,1}, \ldots\}.
    \)
    
Let \(\bar{\vz} \in A\). Then there exists \((c_{r_s})_{s\in\N} \subset A\) such \(c_{r_s} \xrightarrow[s \to \infny]{} \bar{\vz}\). Let \(\varepsilon > 0\) and \(N \in \N\). There exists \(s \in \N\) such that \(\metricd{c_{r_s}}{\bar{\vz}} < \varepsilon /2\). Let \(L \in \N\) such that \(1/L < \varepsilon/2\) and \(L > r_s\). We have \(\vw_{\ell,r_s} \in \Bcal(c_{r_s}, 1/\ell)\) for all \(\ell \geq L\). It follows that \(\metricd{\vw_{\ell,r_s}}{\bar{\vz}} < \varepsilon\) for all \(\ell \geq L\). Let \(\ell \geq \max\{L,N\}\). By definition, there exists \(k \geq N\) such that \(\vz_k = \vw_{\ell, r_s}\). Then \(\metricd{\vz_k}{\bar{\vz}} < \varepsilon\) and \(\bar{z}\) is an accumulation point of \((\vz_k)_{k \in \N}\).

Let \(\bar{\vz} \not\in A\). Since \(A\) is closed, we have that \(\dist(\bar{\vz}, A) > 0\), with 
\[
\dist(\bar{\vz}, A) = \inf\{\metricd{\bar{\vz}}{a}: a \in A\}.
\]
Let \(L \in \N\) such that \(1/L < \dist(\bar{\vz}, A) / 2\). So for every \(\ell \geq L\), for all \(s \in \{1, \ldots, \ell\}\), \(\metricd{c_s}{\vw_{\ell,s}} < 1/\ell \leq 1/L\) . Suppose that there exist \(\ell \geq L\) and \(s \in \{1,\ldots,\ell\}\) such that \(\metricd{\vw_{\ell,s}}{\bar{\vz}} < 1/L\). It follows that 
\[
2/L < \dist(\bar{\vz}, A) \leq \metricd{\bar{\vz}}{c_s} \leq \metricd{\bar{\vz}}{\vw_{\ell,s}} + \metricd{\vw_{\ell,s}}{c_s} < 1/L + 1/L = 2/L.\] This is a contradiction. So for every \(\ell \geq L\), \(\metricd{\bar{\vz}}{\vw_{\ell,s}} \geq 1/L\) for all \(s \in \{1, \ldots, \ell\}\). That means that there exists \(\bar{k} \in \N\) such that for any \(k \geq \bar{k}\), \(\metricd{\vz_k}{\bar{\vz}} \geq 1/L\). Thus \(\bar{\vz}\) is not an accumulation point of \((\vz_k)_{k \in \N}\).

We can conclude that the accumulation points of \((\vz_k)_{k \in \N}\) are exactly \(A\).
\end{proof}

\begin{claim}\label{lem:fnctcounterex}
    Let \(S \subset \Rell\) be closed with empty interior. Then, there exists a differentiable and locally Lipschitz function \(\bar{g}\colon 
    \Rell \to \R\) such that \(\bar{g}(z)= 0\) for every \(z \in S\), and \(\bar{g}(z) > 0\) for every \(z \in \Rell \setminus S\). Moreover  \(\bar{g}\) is continuously differentiable everywhere except on \(S\) with \(\Bcal_c(0,1) \subset \clarke \bar{g}(z)\) for any \(z \in S\). In particular, if \(S\) is defined as in Section~\myref{sect:failure}, \(\bar{g}\) is also path differentiable.
\end{claim}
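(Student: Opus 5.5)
The construction superposes two pieces: a smooth nonnegative function whose zero set is exactly \(S\), and a sequence of small, rescaled, steep bumps placed off \(S\) that accumulate exactly on \(S\). The smooth piece gives the sign conditions. The bumps give Lipschitz gradients of unit size arbitrarily close to every point of \(S\), which produces the large Clarke subdifferential.

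\emph{Construction.} By Whitney's theorem, every closed set is the zero set of a \(C^\infty\) function. So take \(h\colon\Rell\to\R\) of class \(C^\infty\) with \(h\ge 0\) and \(h^{-1}(0)=S\) (for instance the square of such a function). Since \(h\) is minimal on \(S\), \(\nabla h=0\) on \(S\).

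Next apply Lemma~\myref{lem:subseqtocantor} to \(A=S\). This gives distinct points \(\vz_k\notin S\) whose set of accumulation points is exactly \(S\). Choose radii \(r_k>0\) with
\[
r_k\le \min\{\dist(\vz_k,S)^2,\ \dist(\vz_k,S)/4\},
\]
shrunk further so that the balls \(\Bcal(\vz_k,r_k)\) are pairwise disjoint. This is possible inductively, since the points are distinct and any compact set away from \(S\) contains only finitely many \(\vz_k\).

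Fix a \(C^\infty\) radial profile \(\psi(y)=\theta(\norm{y})\), supported in \(\Bcal(0,1)\), with \(\psi\ge0\) and \(\theta'\) taking every value in \([-1,0]\). Then \(\{\nabla\psi(y)\}\supset\Bcal_c(0,1)\) and \(\norm{\nabla\psi}\le1\). Set
\[
\bar g(z)=h(z)+\sum_k r_k\,\psi\big((z-\vz_k)/r_k\big).
\]

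\emph{Routine properties.}
\begin{itemize}
\item Sign conditions. \(\bar g\ge h\), so \(\bar g>0\) off \(S\); and \(\bar g=0\) on \(S\), since the balls avoid \(S\).
\item \(C^1\) off \(S\). Away from \(S\) the sum is locally finite, so \(\bar g\) is \(C^\infty\) on \(\Rell\setminus S\).
\item Differentiability on \(S\). Let \(z\in S\) and let \(w\) lie in the \(k\)-th ball. Then \(\dist(\vz_k,S)\le 2\norm{w-z}\), so the bump contributes at most \(\norm\psi_\infty r_k\le C\norm{w-z}^2\). Together with \(\nabla h(z)=0\), this gives \(\nabla\bar g(z)=0\).
\item Local Lipschitz continuity. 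The gradient is locally bounded (each bump has gradient of norm at most \(1\), plus \(\nabla h\)), and \(\bar g\) is differentiable everywhere, so the mean value theorem applies.
\item Clarke subdifferential. Fix \(z\in S\) and a subsequence \(\vz_{k_j}\to z\). For each unit-ball vector \(v\) pick \(y_j\) with \(\nabla\bar g(\vz_{k_j}+r_{k_j}y_j)=\nabla h(\cdot)+v\to v\), using continuity of \(\nabla h\) and \(\nabla h(z)=0\). Hence \(\Bcal_c(0,1)\subset\clarke\bar g(z)\).
\end{itemize}

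\emph{Main obstacle: path differentiability for the fractal \(S\).} I take \(D=\clarke\bar g\) as the candidate conservative field; it is graph closed, locally bounded and nonempty valued. Let \(\gamma\) be an absolutely continuous curve.
\begin{itemize}
\item At almost every \(t\) with \(\gamma(t)\notin S\), the classical chain rule holds and \(\clarke\bar g(\gamma(t))=\{\nabla\bar g(\gamma(t))\}\).
\item On \(T=\{t:\gamma(t)\in S\}\), the classical chain rule still gives \(\tfrac{d}{dt}\bar g(\gamma(t))=\scalarp{\nabla \bar g(\gamma(t)),\dot\gamma(t)}=0\) at almost every \(t\). But conservativity requires \(\scalarp{v,\dot\gamma(t)}=0\) for every \(v\) in a set containing the unit ball, so we need \(\dot\gamma(t)=0\) for almost every \(t\in T\).
\end{itemize}
This is where the specific fractal \(S\) enters. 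Its projections on the two rotated axes are Lebesgue-null Cantor sets. At almost every \(t\in T\) (a density point of \(T\) where \(\gamma\) is differentiable), the projection of \(\gamma\) onto a rotated axis has derivative zero: a differentiable real function mapping a set into a null set has zero derivative almost everywhere on it. Two independent directions then force \(\dot\gamma(t)=0\). This is exactly the argument of \cite[Section 4]{pauwels2023conservative}, which I would invoke, checking only that it relies on \(S\) alone and not on the particular function built there.
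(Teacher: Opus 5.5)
Your proposal is correct and follows essentially the paper's own construction: a Whitney function vanishing exactly on \(S\), plus disjoint rescaled radial bumps centred at the points from Lemma~\ref{lem:subseqtocantor}; the unit ball is recovered in \(\clarke \bar g\) from gradients inside the bumps, and path differentiability comes from the null-measure argument of \cite{pauwels2023conservative} showing that \(\dot\gamma(t)=0\) for almost every \(t\) with \(\gamma(t)\in S\). Your extra requirement \(r_k\le\min\{\dist(\vz_k,S)^2,\dist(\vz_k,S)/4\}\) is a welcome sharpening, because the paper only asks for \(r_k\to 0\) and pairwise disjoint balls, and that alone guarantees neither that the balls avoid \(S\) (needed for \(\bar g=0\) on \(S\)) nor that \(\bar g\) is differentiable at points of \(S\).
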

\begin{proof} 
    Consider the bump function \(\bar{b}\colon \R \to \R\) defined by
     \[
        \bar{b}(x) \coloneq
        \begin{cases}
         e^{-1/(1-x^2)} & \text{if } |x| < 1, \\
        0 & \text{if } |x| \geq 1.
        \end{cases}
    \]
    Let \(b(x) = \left(\bar{b}(x)\right)^2\). The derivative \(b^{'}\) is odd and \(b^{'}(1/2) < 0\). Define \(\varphi\colon \R \to \R\) as \(\varphi(x) = \frac{1}{-b^{'}(1/2)} b(x)\).
    The function \(\varphi\) is infinitely differentiable and is support is \([-1,1]\). In addition, \(\varphi^{\prime}(1/2) = -1\) and \(\varphi^{\prime}(-1/2) = -\varphi^{\prime}(1/2) = 1 \). 

    By Whitney theorem \cite[Theorem 2.29]{Lee2012ISM}, \(S\) is the zero of an infinitely differentiable and non-negative function \(\bar{f}\colon \Rell \to \R\) and, for any \(z \in S\), \(\nabla \bar{f}(z) = 0\). Thanks to Lemma~\myref{lem:subseqtocantor}, there exists a sequence \((z_k)_{k\in\N} \subset \Rell \setminus S\) such that its set of accumulation points is \(S\). For any \(k \in \N\), consider \(\Bcal(z_k,r_k)\) such that \(r_k > 0\), \(r_k \xrightarrow[k \to \infny]{} 0\), and \(\Bcal(z_k,r_k) \cap \Bcal(z_{s},r_{s}) = \varnothing\) for \(s \neq k\). Such a choice is possible, because for each $k$, $z_k$ is at positive distance of \(z_i\) for each $i \neq k$ and at positive distance of \(S\) the set of accumulation points.
    Define \(\bar{g}\colon \Rell \to \R\) as 
    \[
    z \mapsto \bar{f}(\vz) + \sum_{k \in \N} \varphi\left(\frac{\norm{\vz - \vz_k}}{r_k}\right)r_k. 
    \]
    Then, \(\bar{g}(z)= 0\) for every \(z \in S\), and \(\bar{g}(z) > 0\) for every \(z \in \Rell \setminus S\). Moreover  \(\bar{g}\) is differentiable and locally Lipschitz on \(\Rell\). It is continuously differentiable everywhere except on \(S\) because \(\Bcal_c(0,1) \subset \clarke \bar{g}(z)\) for any \(z \in S\). Indeed, let \(\bar{z} \in S\) and, without loss of generality, we assume that \(z_k \xrightarrow[k \to \infny]{} \bar{z}\). Let \(v \in \{w \in \Rell: \norm{w} = 1\}\). Consider \((\bar{z}_k)_{k\in\N}\) such that \(\bar{z}_k = z_k - \frac{r_k}{2}v\) for all \(k\in\N\). Then we have, for any \(k \in \N\), \(\nabla \bar{g}(\bar{z}_k) = \nabla \bar{f}(\bar{z_k}) + v\). So, \(\bar{z}_k \xrightarrow[k \to \infny]{} \bar{z}\) and \(\nabla\bar{g}(\bar{z}_k)\xrightarrow[k \to \infny]{} v\). It follows that \(v \in \clarke \bar{g}(\bar{z})\). Since \(v\) is arbitrary in \(\{w \in \Rell: \norm{w} = 1\}\), and by Definition~\myref{def:clarkesubdiff},
    we have 
    \[ \conv\,\{w \in \Rell: \norm{w} = 1\} = \Bcal_c(0,1) \subset \clarke \bar{g}(\bar{z}).\]
    Now let us assume that \(S\) is defined as in Section~\myref{sect:failure}. Let \(\gamma\colon [0,1] \to \Rell\) be an absolutely continuous curve. Given that \(\bar{g}\) is locally Lipschitz, \(\bar{g} \circ \gamma \) is absolutely continuous. Let \(\Omega \subset [0,1]\) be the full measure set where \(\bar{g} \circ \gamma\) and \(\gamma\) are differentiable.
    Consider the following three sets:
    \begin{align}
         &S_1 = \{t \in \Omega: \gamma(t) \in \Rell \setminus S\}, \nonumber \\
        &S_2 = \{t \in \Omega: \gamma(t) \in S \text{ and } \dot{\gamma}(t) = 0\}, \nonumber \\
        &S_3 = \{t \in \Omega: \gamma(t) \in S \text{ and } \dot{\gamma}(t) \neq 0\}. \nonumber
    \end{align}
    We have \(\Omega = S_1 \cup S_2 \cup S_3\). Since \(\bar{g}\) is continuously differentiable outside \(S\), the chain rule holds on \(S_1\), i.e., for any \(t \in S_1\), 
    \begin{align*}
    \frac{d}{dt}\bar{g}(\gamma(t)) = \scalarp{u,\dot{\gamma}(t)}\quad \forall\, u \in \clarke \bar{g}(\gamma(t)) = \{\nabla \bar{g}(\gamma(t))\}.
    \end{align*}
    Thanks to \cite[Lemma 6]{pauwels2023conservative}, we know that \(S_3\) has zero measure. As \(\bar{g}\) is locally Lipschitz and \(\gamma(t+h) = \gamma(t) + o(h)\) for all \((t+h,t) \in \Omega\times S_2\) with \(|h|\) small enough, the chain rule holds in \(S_2\). 
    In conclusion, the chain rule holds for almost all \(t \in [0,1]\) and \(\bar{g}\) is path differentiable.
\end{proof}
%%%%%%%%%%%%%%%%%%%%%%%%%%%%%%%%%%%%%%%%%%%%%%%%%%%%%%%%%%%%%%%%%%%%%%%%%%%%%%%%%%%
%%%%%%%%%%%%%%%%%%%%%%%%%%%%%%%%%%%% Bibliography printing %%%%%%%%%%%%%%%%%%%%%%%%%%%%%
%%%%%%%%%%%%%%%%%%%%%%%%%%%%%%%%%%%%%%%%%%%%%%%%%%%%%%%%%%%%%%%%%%%%%%%%%%%%%%%%%%%

\runinsectionstar[\large]{Acknowledgements}
The authors warmly thank Bruno Despr\'es and François Pacaud for relevant discussions and suggestions. 

This work has been supported by the Occitanie region, the European Regional Development Fund (ERDF), and the French government, through the France 2030 project managed by the National Research Agency (ANR) with the reference number ``ANR-22-EXES-0015''.

JB and EP thank AI Interdisciplinary Institute ANITI funding, through the ANR under the France 2030 program (grant ANR-23-IACL-0002), Chair TRIAL, Air Force Office of Scientific Research, Air Force Material Command, USAF, under grant numbers FA8655-22-1-7012. JB and EP acknowledge support from ANR MAD. JB and EP thank TSE-P and acknowledge support from ANR Chess, grant ANR-17-EURE-0010, ANR Regulia. EP acknowledges support from IUF. As part of the Madlearning project, this work received funding from the French government, managed by the National Research Agency (ANR) under the France 2030 program, with reference number 'ANR-25-PEIA-0002'.

\bibliographystyle{plainnat}%
\bibliography{references}
%%%-------------------------------------------------
\end{document}